\documentclass[EJP]{ejpecp} 

\usepackage{mathrsfs}
\usepackage{enumerate}

\DeclareMathOperator{\len}{len}
\DeclareMathOperator{\Leb}{Leb} \DeclareMathOperator{\interior}{int}



\SHORTTITLE{Limit of the Wulff crystal for isoperimetry in percolation}

\TITLE{Limit of the Wulff crystal when approaching criticality for isoperimetry in 2D percolation}



\AUTHORS{Chang-Long~Yao\footnote{Academy of Mathematics and Systems Science, Chinese Academy of Sciences, China.
    \EMAIL{deducemath@126.com}}}

\KEYWORDS{percolation; Cheeger constant; isoperimetry; shape theorem; Wulff crystal} 

\AMSSUBJ{60K35; 82B43} 

\SUBMITTED{March 8, 2023} 
\ACCEPTED{00, 0000} 




\VOLUME{0}
\YEAR{0000}
\PAPERNUM{0}
\DOI{10.1214/YY-TN}


\ABSTRACT{We consider isoperimetric sets, i.e., sets with minimal vertex boundary for a prescribed
volume, of the infinite cluster of supercritical site percolation on the triangular lattice.  Let $p$ be the percolation parameter and let $p_c$ be the critical point.  By adapting the proof of Biskup, Louidor, Procaccia and Rosenthal \cite{BLPR15} for isoperimetry in bond percolation on the square lattice, we show that the isoperimetric sets, when suitably rescaled, converge almost surely to a translation of the normalized Wulff crystal $\widehat{W}_p$.  More importantly, we prove that $\widehat{W}_p$ tends to a Euclidean disk as $p\downarrow p_c$.  This settles the site version of a conjecture proposed in \cite{BLPR15}.  A key input to the proof is the convergence of the limit shapes for near-critical Bernoulli first-passage percolation proved by the author recently.
}




\begin{document}

\section{Introduction}
\subsection{The model and main result}\label{s1}
The goal of an isoperimetric problem is to characterize sets of a prescribed volume with minimal boundary measure.
The classical isoperimetric problems are stated in the continuum and have a long history, but they have recently been studied in the discrete setting as well (see, e.g., \cite{BLPR15,CD20,Dem20a,Dem20b,GMPT17,Gol18a,Gol18b,Pet08} for such studies in bond percolation on $\mathbb{Z}^d$).  The Cheeger constant is a way of encoding such problems for graphs.  Given a graph $G(V(G), E(G))$, we define the \textbf{vertex boundary} $\partial_G A$ of a subset $A$ of $V(G)$ by
\begin{equation*}
\partial_G A:=\{v\in V(G)\backslash A: v\mbox{ is adjacent to a vertex in }A\},
\end{equation*}
where $v$ is adjacent to a vertex $u$ means that $(u,v)\in E(G)$.  For a subgraph $H\subset G$, we write $\partial_G H:=\partial_G V(H)$ for short.  For a finite set $A$, let $|A|$ denote its cardinality; for a finite graph $G$, write $|G|:=|V(G)|$.  The \textbf{Cheeger constant} of a finite graph $G$, also called the \textbf{isoperimetric constant}, is defined by
\begin{equation*}
\Phi_{G}:=\min\left\{\frac{|\partial_{G} H|}{|H|}:H\subset G,0<|H|\leq\frac{|G|}{2}\right\}.
\end{equation*}
The continuous version of this constant was introduced in the context of manifolds by Cheeger in his thesis \cite{Ch70}.
For additional background on isoperimetric problems, in particular, on the Cheeger constant defined by using the edge boundaries of graphs, see
Section 1 of \cite{BLPR15}.  In the present paper, we shall only consider the vertex-boundary version of the Cheeger constant, since our main result relies on the scaling limit of near-critical site percolation on the triangular lattice.

In this paper, we are interested in isoperimetric properties of random graphs arising from site percolation on the triangular lattice.  We refer the reader to the books \cite{BR06,Gri99} and the survey \cite{BD13} for background on percolation.  Let $\mathbb{T}$ denote the triangular lattice embedded in $\mathbb{C}$ (we identify the plane
$\mathbb{R}^2$ with the set $\mathbb{C}$ of complex numbers in the usual way), with vertex (i.e., site) set
\begin{equation*}
V(\mathbb{T}):=\{x+ye^{\pi i/3}\in\mathbb{C}:x,y\in\mathbb{Z}\},
\end{equation*}
and edge (i.e., bond) set $E(\mathbb{T})$ obtained by connecting all pairs
$u,v\in V(\mathbb{T})$ for which ${\|u-v\|_2=1}$, where $\|\cdot\|_2$ denotes the Euclidean norm.  Site percolation on $\mathbb{T}$ is defined as follows:  For each $p\in [0,1]$, we consider the i.i.d. family $\{\omega(v):v\in V(\mathbb{T})\}$ of Bernoulli random variables with parameter $p$, that is, $\omega(v)=1$ with probability $p$ and $\omega(v)=0$ with probability $1-p$.  This gives rise to a product probability measure $\mathbf{P}_p$ on the set
of configurations $\{0,1\}^{V(\mathbb{T})}$.  A vertex $v$ is declared \textbf{open} in the configuration $\omega$ if $\omega(v)=1$ and \textbf{closed} if $\omega(v)=0$.  We usually represent site percolation on $\mathbb{T}$ as a random two-coloring of the faces of the dual hexagonal lattice $\mathbb{H}$, each face centered at $v$ being \textbf{yellow} if $\omega(v)=1$ and \textbf{blue} if $\omega(v)=0$;  see Figure \ref{fppfig}.  Sometimes we shall view a vertex $v$ as the hexagon in $\mathbb{H}$ centered at $v$.

\begin{figure}
\centering
\includegraphics[height=0.35\textwidth]{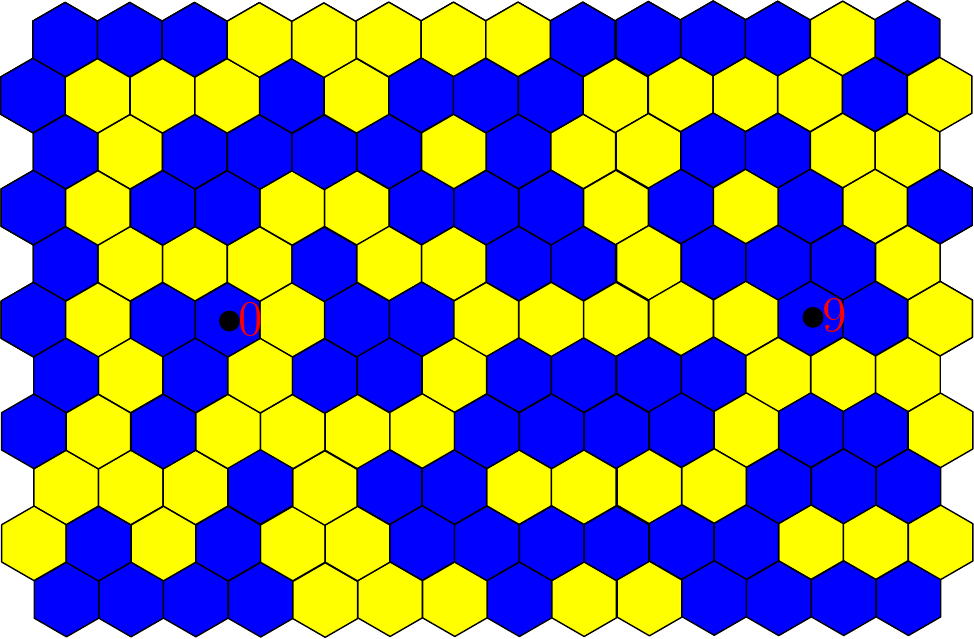}
\caption{A site percolation configuration on the triangular lattice $\mathbb{T}$.  Each
hexagon of the hexagonal lattice
$\mathbb{H}$ represents a vertex of $\mathbb{T}$, and is
colored yellow ($\omega(v)=1$) or blue ($\omega(v)=0$).  When this configuration is viewed as a Bernoulli first-passage percolation configuration, we have $T(0,9)=2$.}\label{fppfig}
\end{figure}

It is well known that site percolation on $\mathbb{T}$ exhibits a phase transition
at $p_c=1/2$: When $p\leq p_c$ there are almost surely no infinite open clusters, whereas when $p>p_c$ there is almost surely a unique infinite
open cluster.  We shall focus on the supercritical ($p>p_c$) regime, and denote by $\mathcal{C}_p^{\infty}$ the infinite (open) cluster.  Let ${\theta_p:=\mathbf{P}_p(0\in\mathcal{C}_p^{\infty})}$ be the density of $\mathcal{C}_p^{\infty}$ within $\mathbb{T}$.  Write $B_r:=[-r,r]^2\cap V(\mathbb{T})$ and  let $B_r(x):=x+B_r$ for $x\in\mathbb{R}^2$.  Let $\mathcal{C}_p^n$ denote the largest connected component of $\mathcal{C}_p^{\infty}\cap B_n$.

Following \cite{BLPR15}, we use two natural modified versions of the Cheeger constant to study the geometry of $\mathcal{C}_p^{\infty}$.  The first is
\begin{equation*}
\widetilde{\Phi}_{\mathcal{C}_p^n}:=\min\left\{\frac{\left|\partial_{\mathcal{C}_p^{\infty}} H\right|}{|H|}:H\subset \mathcal{C}_p^n,0<|H|\leq\frac{|\mathcal{C}_p^n|}{2}\right\}.
\end{equation*}
This modification is natural in the sense that, the vertex boundary of $H\subset \mathcal{C}_p^n$ is taken in the whole infinite cluster $\mathcal{C}_p^{\infty}$ instead of $\mathcal{C}_p^n$, which avoids giving advantage to subgraphs that touch the boundary of $B_n$.  Let $\mathscr{G}_{\mathcal{C}_p^n}$ be the set of minimizers of $\widetilde{\Phi}_{\mathcal{C}_p^n}$.

Another version of the Cheeger constant of $\mathcal{C}_p^{\infty}$, called the \textbf{anchored isoperimetric profile}, is defined by
\begin{equation*}
\Phi_{\mathcal{C}_p^{\infty},0}(n):=\min\left\{\frac{\left|\partial_{\mathcal{C}_p^{\infty}} H\right|}{|H|}:0\in H\subset \mathcal{C}_p^{\infty}, H\mbox{ connected }, 0<|H|\leq n\right\},
\end{equation*}
where we condition on the event $\{0\in\mathcal{C}_p^{\infty}\}$.  Let $\mathscr{G}_{\mathcal{C}_p^{\infty}}(n)$ be the set of minimizers of $\Phi_{\mathcal{C}_p^{\infty},0}(n)$.

To characterize the asymptotic shape of $\mathscr{G}_{\mathcal{C}_p^n}$ or $\mathscr{G}_{\mathcal{C}_p^{\infty}}(n)$, we need to introduce some notation.  For a continuous curve $\gamma: [0,1]\rightarrow \mathbb{R}^2$ and a norm $\rho$ on $\mathbb{R}^2$, let the $\rho$-length of $\gamma$ be defined as
\begin{equation}\label{e10}
\len_{\rho}(\gamma):=\sup_{N\geq 1}\sup_{0\leq t_0\leq\cdots\leq t_N\leq 1}\sum_{i=1}^{N}\rho(\gamma(t_i)-\gamma(t_{i-1})).
\end{equation}
A curve $\gamma$ is called rectifiable if $\len_{\rho}(\gamma)<\infty$ for any norm $\rho$ on $\mathbb{R}^2$.  If $\gamma$ is a Jordan curve, its interior $\interior(\gamma)$ is the unique bounded component of $\mathbb{R}^2\backslash\gamma$.

In Proposition \ref{p1}, we will define a norm $\beta_p$ associated with supercritical site
percolation on $\mathbb{T}$ with parameter $p$.  We define $\varphi_p$ as the solution of the following continuous isoperimetric problem:
\begin{equation}\label{e1}
\varphi_p:=\inf\{\len_{\beta_p}(\gamma):\gamma\mbox{ is a Jordan curve in }\mathbb{R}^2, \Leb(\interior(\gamma))=1\},
\end{equation}
where $\Leb$ stands for the Lebesgue measure on $\mathbb{R}^2$.  A minimizer of
$\varphi_p$ can be explicitly constructed, which is referred to as the Wulff
construction.  Define
\begin{equation}\label{e2}
W_p:=\bigcap_{u:\|u\|_2=1}\{x\in\mathbb{R}^2: u\cdot x\leq\beta_p(u)\}\quad\mbox{and}\quad\widehat{W}_p:=\frac{W_p}{\sqrt{\Leb(W_p)}},
\end{equation}
where $\cdot$ denotes the Euclidean scalar product.   So $\widehat{W}_p$ is $W_p$ normalized to have a unit area.  We call $W_p$ the \textbf{Wulff
crystal} associated to $\beta_p$.  Note that $W_p$ is a convex set and its boundary $\partial W_p$ is a simple curve.
Taylor \cite{Tay74,Tay75} showed that $\partial\widehat{W}_p$ is a minimizer of (\ref{e1}), and the minimizer is unique up to shifts.

For two compact sets $A,B$ in $\mathbb{R}^2$, the \textbf{Hausdorff distance} between $A$ and $B$ is defined by
\begin{equation*}
d_H(A,B):=\max\left\{\sup_{x\in A}\inf_{y\in B}\|x-y\|_{\infty}, \sup_{y\in B}\inf_{x\in A}\|x-y\|_{\infty}\right\},
\end{equation*}
where $\|\cdot\|_{\infty}$ denotes the $\ell^{\infty}$-norm on $\mathbb{R}^2$.

For $x\in\mathbb{R}^2$ and $r>0$, let $\mathbb{D}_r(x):=\{y\in\mathbb{R}^2:\|x-y\|_2\leq r\}$ denote the Euclidean disk centered at $x$ with radius $r$.  Write $\mathbb{D}_r:=\mathbb{D}_r(0)$.

Analogs of the following two theorems for the edge-boundary version of isoperimetry in bond percolation on $\mathbb{Z}^2$ were proved by Biskup, Louidor, Procaccia and Rosenthal \cite{BLPR15}.  We shall explain briefly in Section \ref{adapt} that the adaptation of the proofs in \cite{BLPR15} is straightforward to our setting once we define the norm $\beta_p$ by using the right-most paths for site percolation on $\mathbb{T}$.
\begin{theorem}[Cheeger constant, see Theorems 1.4 and 1.8 in \cite{BLPR15} for the analogs]\label{t4}
Suppose $p>p_c$.  Let $\varphi_p$ be the constant defined in (\ref{e1}).  Then, $\mathbf{P}_p$-almost surely,
\begin{equation*}
\lim_{n\rightarrow\infty}n\widetilde{\Phi}_{\mathcal{C}_p^n}=\frac{1}{\sqrt{2}}\theta_p^{-1}\varphi_p.
\end{equation*}
Moreover, for the minimizers the following holds: $\mathbf{P}_p$-almost surely,
\begin{equation*}
\max_{G\in\mathscr{G}_{\mathcal{C}_p^n}}\inf_{x\in\mathbb{R}^2}d_H(n^{-1}V(G),x+\sqrt{2}\widehat{W}_p)\rightarrow 0\quad\mbox{as }n\rightarrow\infty
\end{equation*}
and
\begin{equation*}
\max_{G\in\mathscr{G}_{\mathcal{C}_p^n}}\left|\frac{|G|}{\theta_p|B_n|/2}-1\right|\rightarrow 0\quad\mbox{as }n\rightarrow\infty.
\end{equation*}
\end{theorem}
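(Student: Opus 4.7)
The plan is to follow the three-step strategy of \cite{BLPR15} verbatim, the only lattice-dependent input being Proposition \ref{p1}, which defines $\beta_p$ via right-most yellow paths on $\mathbb{H}$ (the natural $\mathbb{T}$-analog of the right-most open path used on $\mathbb{Z}^2$). With $\beta_p$ in hand and Taylor's theorem providing existence and uniqueness up to shifts of the Wulff minimizer, the rest of the argument is lattice-insensitive modulo the normalisation constants arising from the $\mathbb{T}$-vertex density, which accounts for the $\sqrt{2}$ that does not appear in \cite{BLPR15}.

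For the upper bound I would test $\widetilde{\Phi}_{\mathcal{C}_p^n}$ against the candidate
\begin{equation*}
H_n^{\ast} := \mathcal{C}_p^{\infty} \cap \bigl(x_n + n\sqrt{2}\widehat{W}_p\bigr),
\end{equation*}
with $x_n \in \mathbb{R}^2$ chosen so that this set sits inside $B_n$. A density shape theorem for $\mathcal{C}_p^{\infty}$ gives $|H_n^{\ast}| = (1+o(1))\theta_p|B_n|/2$, so $H_n^\ast$ is admissible, and a Wulff-style surface computation identifies $|\partial_{\mathcal{C}_p^{\infty}} H_n^{\ast}|$ as a Riemann sum over $\partial(n\sqrt{2}\widehat{W}_p)$ of the surface tension $\beta_p$, yielding the upper bound. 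For the matching lower bound I would run the Cerf--Pisztora-type coarse-graining of \cite{BLPR15}: given any admissible $H \subset \mathcal{C}_p^n$, tile $\mathbb{R}^2$ by mesoscopic squares of side $L$ with $1\ll L\ll n$, mark those boxes meeting the interface between $H$ and $\mathcal{C}_p^\infty\setminus H$, show via exponential concentration for cluster densities that the unmarked boxes lie either essentially inside or essentially outside $H$, and that the marked boxes organise along a small number of polygonal Jordan loops $\gamma_n\subset\mathbb{R}^2$ satisfying
\begin{equation*}
\bigl|\partial_{\mathcal{C}_p^{\infty}} H\bigr| \;\ge\; (1-o(1))\,\theta_p\,\len_{\beta_p}(\gamma_n)/\sqrt{2}
\end{equation*}
box by box. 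The volume constraint, combined with the density estimate inside unmarked boxes, forces $\Leb(\interior(\gamma_n/n)) \to 2\Leb(\widehat{W}_p)$, so (\ref{e1}) yields $\len_{\beta_p}(\gamma_n) \ge (1-o(1))\sqrt{2}\,n\,\varphi_p$, and the matching lower bound follows.

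With both bounds in place, the shape and volume statements come from a standard compactness argument: the family $\{\gamma_n/n\}$ is precompact in Hausdorff distance among rectifiable Jordan curves, any limit point is a unit-area extremiser in (\ref{e1}) after rescaling, and Taylor's uniqueness up to shifts forces the limit to equal $x_{\infty} + \partial(\sqrt{2}\widehat{W}_p)$, while the internal density concentration yields $|G|/(\theta_p|B_n|/2)\to 1$. The main obstacle is the coarse-graining step: one needs a large-deviation estimate showing that, with overwhelming probability, every $\mathcal{C}_p^{\infty}$-interface crossing a box of side $L$ roughly transverse to direction $u$ costs at least $(1-o(1))\theta_p L \beta_p(u)/\sqrt{2}$ neighbouring $\mathcal{C}_p^{\infty}$-vertices. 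This is precisely the reason Proposition \ref{p1} builds $\beta_p$ from right-most yellow paths, since that construction calibrates the surface tension with the correct constant and reduces the estimate to a subadditive ergodic argument together with a slab-cutting procedure exactly as in \cite{BLPR15}. Once this estimate is in place, none of the subsequent combinatorial-geometric steps of \cite{BLPR15} uses anything specific to $\mathbb{Z}^2$, justifying the assertion in Section \ref{adapt} that the adaptation is straightforward.
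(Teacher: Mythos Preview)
Your sketch does not actually follow the three-step strategy of \cite{BLPR15}. That paper is built entirely on the two-dimensional right-most-path machinery: Step 1 proves concentration for the right-boundary distance $b(\widetilde 0,\widetilde x)$ around $\beta_p(x)$ (via a truncated variant and Kesten's martingale inequality), and Step 2 converts this into two approximation statements --- \emph{circuits-to-curves} (any long open right-most circuit $\gamma$ is Hausdorff-close to a rectifiable Jordan curve $\lambda$ with $\mathbf{b}(\gamma)\ge(1-\epsilon)\len_{\beta_p}(\lambda)$) and \emph{curves-to-circuits} (any convex curve $\lambda$ is approximated, after scaling by $R$, by an open right-most circuit with $\mathbf{b}(\gamma)\le(1+\epsilon)\len_{\beta_p}(R\lambda)$). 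The upper bound is obtained by building such a circuit near $\partial(n\sqrt 2\,\widehat W_p)$ and taking $H$ to be the $\mathcal C_p^\infty$-vertices it encloses; the lower bound comes from applying Proposition~\ref{p4} to a minimizer $G$ to get a right-most circuit whose $\mathbf b$ controls $|\partial_{\mathcal C_p^\infty}G|$, and then invoking circuits-to-curves. There is no Cerf--Pisztora box-marking in \cite{BLPR15}; that is the higher-dimensional route of \cite{Dem20b,Gol18b}.

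Your version also contains concrete errors. First, the $\sqrt 2$ is \emph{not} a lattice-density artifact: it appears identically in Theorems~1.4 and~1.8 of \cite{BLPR15} on $\mathbb Z^2$ and arises from the constraint $|H|\le|\mathcal C_p^n|/2$ (the optimal region has Lebesgue area $\approx 2n^2$, hence linear scale $\sqrt 2\,n$). Second, your box-crossing estimate carries a spurious factor $\theta_p/\sqrt 2$: by the very definition of $\beta_p$ the cost of an interface across a box of side $L$ transverse to $u$ is $(1+o(1))L\beta_p(u)$ open boundary vertices, with no extra $\theta_p$. Third, the upper bound by intersecting $\mathcal C_p^\infty$ with a dilate of $\widehat W_p$ does not give $|\partial_{\mathcal C_p^\infty}H_n^\ast|$ as a Riemann sum of $\beta_p$ without further argument --- the correspondence between vertex boundary and $\beta_p$-length passes through right-most circuits (Proposition~\ref{p4}), which is exactly why \cite{BLPR15} builds the circuit first and \emph{then} takes $H$ to be what it encloses.
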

\begin{theorem}[Isoperimetric profile, see Theorems 1.2 and 1.7 in \cite{BLPR15} for the analogs]\label{t3}
Suppose $p>p_c$.  Let $\varphi_p$ be the constant defined in (\ref{e1}).  Then, $\mathbf{P}_p(\cdot\mid 0\in \mathcal{C}_p^{\infty})$-almost surely,
\begin{equation*}
\lim_{n\rightarrow\infty}n^{1/2}\Phi_{\mathcal{C}_p^{\infty},0}(n)=\theta_p^{-1/2}\varphi_p.
\end{equation*}
Moreover, for the minimizers the following holds: $\mathbf{P}_p(\cdot\mid 0\in \mathcal{C}_p^{\infty})$-almost surely,
\begin{equation*}
\max_{G\in\mathscr{G}_{\mathcal{C}_p^{\infty}}(n)}\inf_{x\in\mathbb{R}^2}d_H(n^{-1/2}V(G),x+\theta_p^{-1/2}\widehat{W}_p)\rightarrow 0\quad\mbox{as }n\rightarrow\infty
\end{equation*}
and
\begin{equation*}
\max_{G\in\mathscr{G}_{\mathcal{C}_p^{\infty}}(n)}\big| |G|/n-1\big|\rightarrow 0\quad\mbox{as }n\rightarrow\infty.
\end{equation*}
\end{theorem}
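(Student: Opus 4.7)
The plan is to follow the strategy of the proofs of Theorems 1.2 and 1.7 in \cite{BLPR15}, which treat the edge-boundary analog for bond percolation on $\mathbb{Z}^2$. The argument splits into matching upper and lower bounds for $n^{1/2}\Phi_{\mathcal{C}_p^\infty,0}(n)$, together with a Hausdorff-compactness step that promotes the sharp constant to the shape statement for minimizers.

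For the upper bound, I would construct an explicit near-optimizer by taking the continuous domain $U_n:=n^{1/2}\theta_p^{-1/2}\widehat{W}_p$, translating it so as to contain a thick region of $\mathcal{C}_p^\infty$ around the origin, and setting $H_n:=\mathcal{C}_p^\infty\cap U_n$ (truncated slightly if necessary so that $|H_n|\leq n$). An ergodic/density estimate gives $|H_n|=(1+o(1))\theta_p\,\Leb(U_n)=(1+o(1))n$, while a shape theorem for $\beta_p$ applied along $\partial U_n$ yields $|\partial_{\mathcal{C}_p^\infty}H_n|=(1+o(1))\len_{\beta_p}(\partial U_n)=(1+o(1))\varphi_p\theta_p^{-1/2}n^{1/2}$. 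Taking the ratio gives the upper bound.

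For the lower bound, given any admissible connected set $H\ni 0$ with $|H|\leq n$, I would coarse-grain $H$ on a mesoscopic box scale $k$ with $1\ll k\ll n^{1/2}$ to produce a Jordan curve $\gamma_H$ enclosing $H$. Standard renormalization arguments of Grimmett--Marstrand and Pisztora type (here adapted to site percolation on $\mathbb{T}$) give, apart from negligible errors, $\Leb(\interior(\gamma_H))\geq (1-o(1))|H|/\theta_p$ and $|\partial_{\mathcal{C}_p^\infty}H|\geq (1-o(1))\len_{\beta_p}(\gamma_H)$; combining these with (\ref{e1}) yields the matching lower bound. The same coarse-graining associates to any sequence of minimizers $G_n\in\mathscr{G}_{\mathcal{C}_p^\infty}(n)$ a sequence of Jordan curves whose interiors have area approaching $\theta_p^{-1}$ and $\beta_p$-perimeter approaching $\varphi_p\theta_p^{-1/2}$, so Cerf's perimeter compactness together with Hausdorff convergence forces any subsequential limit of $n^{-1/2}V(G_n)$ (after recentering) to solve (\ref{e1}) at area $\theta_p^{-1}$, and Taylor's uniqueness theorem identifies it with a translate of $\theta_p^{-1/2}\widehat{W}_p$. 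The volume statement $|G_n|/n\to 1$ follows because strict under-use of the budget would produce a strictly worse boundary-to-volume ratio.

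The main obstacle in transcribing the proofs of \cite{BLPR15} is defining the appropriate analog of right-most open paths for site percolation on $\mathbb{T}$ and using it to construct $\beta_p$ (Proposition \ref{p1}) with the required shape theorem and concentration estimates; once these inputs are in place, the geometric and coarse-graining arguments of \cite{BLPR15} carry over with only cosmetic changes.
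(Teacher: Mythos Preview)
Your proposal is correct and matches the paper's approach: both defer to the machinery of \cite{BLPR15}, identifying as the only nontrivial adaptation the construction of right-most paths for site percolation on $\mathbb{T}$ and the resulting norm $\beta_p$ (Proposition~\ref{p1}). One minor remark: your description of the lower bound and shape step leans on higher-dimensional vocabulary (Grimmett--Marstrand/Pisztora renormalization, Cerf's perimeter compactness), whereas the paper, following \cite{BLPR15}, uses the $2$D-specific route of concentration estimates for the right-boundary distance (via Kesten's martingale inequality) and the explicit ``circuits-to-curves''/``curves-to-circuits'' polygonal approximations; but since your final paragraph correctly reduces everything to transcribing \cite{BLPR15}, this is only a difference of emphasis, not of substance.
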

In \cite{BLPR15}, after established the shape theorems for isoperimetry in bond percolation on $\mathbb{Z}^2$, the authors
conjectured that the normalized Wulff crystal tends to a Euclidean disk as $p\downarrow p_c$ (see Section 1.3 of \cite{BLPR15}).  Note that this conjecture was restated in Section 1.5 of \cite{Gol18b}.  Our main result, Theorem \ref{t1}, confirms the conjecture for the vertex-boundary version of isoperimetry in site percolation on $\mathbb{T}$.  To state this result, we need to recall a notion called \textbf{correlation length} (or characteristic length), which is a concept from near-critical percolation.  Roughly speaking, the system looks like critical percolation on scales smaller than correlation length, while notable super/sub-critical behavior emerges above this length.  There are several natural definitions of correlation length (see, e.g., Section 7.1 of \cite{Nol08}) which are of the same order of magnitude, that is, for any two of these lengths, although they tend to $+\infty$ when $p$ goes to $p_c$, the ratio between them is bounded away from $0$ and $+\infty$.

In the following, we shall define correlation length in terms of the box-crossing events.  Given a box $\Lambda=[x_1,x_2]\times[y_1,y_2]$ and a self-avoiding path $(v_0,v_1,\ldots,v_k)$ of $\mathbb{T}$, we call this path a \textbf{left-right (resp. top-bottom) crossing} of $\Lambda$ if $v_1,\ldots,v_{k-1}\in\Lambda$ and the line segments $\overline{v_0v_1}$ and $\overline{v_{k-1}v_k}$ intersect the left and right (resp. top and bottom) sides of $\Lambda$, respectively.  For each $\epsilon\in(0,1/2)$ and $p\in(p_c,1]$, let
\begin{equation*}
L_{\epsilon}(p):=\inf\left\{n\geq 1: \mathbf{P}_p\left[\exists\mbox{ a left-right closed crossing of $[0,n]^2$}\right]\leq\epsilon\right\}.
\end{equation*}
For any $\epsilon,\epsilon'\in(0,1/2)$, $L_{\epsilon}(p)$ has the same order as $L_{\epsilon'}(p)$; see, e.g., Corollary 37 in \cite{Nol08}.  So we shall take a fixed $\epsilon_0\in(0,1/2)$ and write $L(p):=L_{\epsilon_0}(p)$.  It was proved in \cite{SW01} that $L(p)=(p-p_c)^{-4/3+o(1)}$ as $p\downarrow p_c$.

\begin{theorem}[Main result]\label{t1}
Consider site percolation on the triangular lattice $\mathbb{T}$.  For the constant $\varphi_p$ defined in (\ref{e1}) (as in Theorems \ref{t4} and \ref{t3}), we have
\begin{equation}\label{e6}
\lim_{p\downarrow p_c}L(p)\varphi_p=2\sqrt{\pi}\nu,
\end{equation}
where $\nu$ is the limiting constant in Theorem \ref{t2} for Bernoulli first-passage percolation on $\mathbb{T}$.  Moreover, the normalized Wulff crystal $\widehat{W}_p$ converges in the Hausdorff metric to the Euclidean disk $\mathbb{D}_{1/\sqrt{\pi}}$ as $p\downarrow p_c$.
\end{theorem}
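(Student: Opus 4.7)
The plan is to reduce both assertions to a single scaling relation for the norm $\beta_p$: namely,
\begin{equation}\label{key}
L(p)\beta_p(u) \longrightarrow \nu\|u\|_2 \qquad \text{as } p\downarrow p_c,
\end{equation}
uniformly over unit vectors $u$. Given (\ref{key}), the rescaled Wulff crystal $L(p)W_p$ converges in the Hausdorff metric to $\bigcap_{\|u\|_2=1}\{x:u\cdot x\leq \nu\|u\|_2\}=\mathbb{D}_\nu$, so $\Leb(W_p)=\pi\nu^2 L(p)^{-2}(1+o(1))$ and therefore $\widehat{W}_p=W_p/\sqrt{\Leb(W_p)}$ converges to $\mathbb{D}_{1/\sqrt{\pi}}$. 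Since Taylor's theorem (cited in the excerpt) identifies $\partial\widehat{W}_p$ as the unique (up to translation) minimizer of (\ref{e1}), we have $\varphi_p=\len_{\beta_p}(\partial\widehat{W}_p)$; combining the convergences of $\beta_p$ and of $\widehat{W}_p$ yields $L(p)\varphi_p\to \nu\cdot\mathrm{length}(\partial\mathbb{D}_{1/\sqrt{\pi}})=2\sqrt{\pi}\,\nu$, which is (\ref{e6}).

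To prove (\ref{key}), I would invoke the 2D duality specific to site percolation on $\mathbb{T}$. Since $\mathbb{T}$ is self-matching for site percolation, the closed sites at parameter $p$ form a copy of site percolation at parameter $1-p$; in particular $L(p)=L(1-p)$ (by the very definition of $L$ in terms of closed crossings). On the other hand, by the right-most path construction used to define $\beta_p$ in Proposition \ref{p1}, the surface tension $\beta_p(u)$ of the supercritical phase equals, up to the $\pi/2$ rotation dictated by the right-most/left-most duality and up to elementary boundary effects, the Bernoulli first-passage percolation time constant $\mu_{1-p}(u^\perp)$ on $\mathbb{T}$, where $\mu_q$ refers to BFPP with open parameter $q$ (closed sites of weight $1$, open sites of weight $0$, consistent with Figure \ref{fppfig}). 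Theorem \ref{t2} says that $L(q)\mu_q(v)\to \nu\|v\|_2$ uniformly on the unit circle as $q\uparrow p_c$; substituting $q=1-p$ and $v=u^\perp$ and using $\|u^\perp\|_2=\|u\|_2$ together with $L(p)=L(1-p)$ gives (\ref{key}).

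The main obstacle is the rigorous form of the duality $\beta_p(u)=\mu_{1-p}(u^\perp)$ and the transfer of uniform convergence. The identity is essentially combinatorial: the right-most open path of a supercritical crossing is flanked by a blue interface in the hexagonal representation, and the minimal number of blue sites encountered along that interface is exactly the BFPP passage time at density $1-p$; this should be developed in parallel with the right-most-path machinery needed to define $\beta_p$ (see Section \ref{adapt}). Securing uniformity in the direction $u$ and a quantitative rate of convergence is the delicate part, but both follow from the corresponding uniform statements in Theorem \ref{t2}, since the duality is direction-wise isometric. Once (\ref{key}) is established, the Hausdorff convergence $\widehat{W}_p\to\mathbb{D}_{1/\sqrt{\pi}}$ and the value of the limit $L(p)\varphi_p\to 2\sqrt{\pi}\,\nu$ are routine consequences of the continuity of the Wulff construction and the isotropy of the limiting norm.
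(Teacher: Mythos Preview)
Your high-level strategy---reduce everything to the scaling relation $L(p)\beta_p(u)\to\nu\|u\|_2$ uniformly on the unit circle, and then read off both conclusions from continuity of the Wulff construction---is exactly the paper's. The deductions you sketch from that relation are essentially those in Section~\ref{theorem}.

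The gap is in your proposed mechanism for the scaling relation. You claim a duality identity $\beta_p(u)=\mu_{1-p}(u^\perp)$, justified by ``the minimal number of blue sites encountered along the interface''. Both ingredients of this claim are wrong in the present setup. First, look at the definition: $\mathbf{b}(\gamma)=|\{v\in\partial^+\gamma:v\text{ is open}\}|$ counts the \emph{open} (yellow) vertices on the right boundary, not the closed (blue) ones; there is no colour swap and hence no $p\leftrightarrow 1-p$ duality entering here. Second, there is no $\pi/2$ rotation: $\beta_p(x)$ is defined via right-most paths from $\widetilde 0$ to $\widetilde{nx}$, and $\mu_p(x)$ via geodesics from $0$ to $nx$, both in the \emph{same} direction $x$. (On $\mathbb{T}$ the norm $\mu_p$ has only the sixfold lattice symmetry, so $\mu_p(u)$ and $\mu_p(u^\perp)$ are in general different, and your identity cannot hold.)

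What the paper actually proves is the direct equality $\beta_p=\mu_p$ at the same parameter $p$ (Proposition~\ref{p2}), from which the scaling relation follows immediately via Theorem~\ref{t2}. The inequality $\beta_p\ge\mu_p$ comes from a max-flow/min-cut argument: any open right-most path must have its interface $\partial^*\gamma$ cross every disjoint open Jordan circuit separating the endpoints, and each crossing contributes an open vertex to $\partial^+\gamma$; the number of such circuits equals the FPP passage time up to $O(1)$. The reverse inequality is constructive: one builds an open right-most path whose right boundary is forced to sit on (a piece of) an FPP geodesic, so that $\mathbf b(\gamma)$ is bounded by the passage time. Neither step uses the self-matching duality you invoke.
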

\emph{Idea of the proof of Theorem \ref{t1}.}  Garban, Pete and Schramm \cite{GPS18} established the existence and rotational invariance of the scaling limit of near-critical percolation on $\mathbb{T}$.  Based on this result, it was proved in \cite{Yao22} that the limit shape for Bernoulli FPP is asymptotically circular as $p\downarrow p_c$ (see Theorem \ref{t2} below), which is a key ingredient of the proof of Theorem \ref{t1}.  In the present paper, we show that the norm $\beta_p$ for isoperimetry is exactly the same as the time-constant norm $\mu_p$ for Bernoulli FPP (see Proposition \ref{p2}).  Then it is easy to obtain Theorem \ref{t1}.
\begin{remark}\label{r1}
Little is known about the geometry of $W_p$.  We believe that $W_p$ is not a Euclidean disk for any $p\in(p_c,1]$.  Garet, Marchand, Procaccia and Th\'{e}ret \cite{GMPT17} proved that $\varphi_p$ and $\widehat{W}_p$ are continuous in $p\in(p_c,1]$ for bond percolation on $\mathbb{Z}^2$, and their method applies to our setting as well.
\end{remark}
\subsection{Ingredients from Bernoulli first-passage percolation}\label{FPP}
First-passage percolation (FPP) was introduced
by Hammersley and Welsh in 1965 as a model for the spread of a fluid in a random medium; see \cite{ADH17} for a recent survey.  We shall focus on a special FPP, called \textbf{Bernoulli (site) FPP} on $\mathbb{T}$.  It is equivalent to site percolation on $\mathbb{T}$, but studied from the FPP point of view:
Given a site percolation configuration $\omega$ on $\mathbb{T}$, for each vertex $v$ we view $\omega(v)$ as the passage time of $v$.  \emph{Note that in the FPP literature, in particular in \cite{Yao22}, we have $\mathbf{P}_p(\omega(v)=0)=p=1-\mathbf{P}_p(\omega(v)=1)$, while in the present paper we keep $\mathbf{P}_p$ as in the Bernoulli percolation literature so that $\mathbf{P}_p(\omega(v)=1)=p=1-\mathbf{P}_p(\omega(v)=0)$.}

A \textbf{path} $\gamma$ in $\mathbb{T}$ from $x$ to $y$ of length $|\gamma|=n$ is a sequence $(v_0,\ldots,v_n)$ of vertices in $V(\mathbb{T})$ such that $v_i$ is adjacent to $v_{i-1}$ for all $i=1,\ldots,n$ and $x=v_0,y=v_n$.  Define the passage time of $\gamma$ by $T(\gamma):=\sum_{v\in \gamma}\omega(v)$.
For $x,y\in V(\mathbb{T})$, the \textbf{first-passage time} from $x$ to $y$ is
defined by
\begin{equation*}
T(x,y):=\inf\{T(\gamma):\gamma \mbox{ is a path from $x$ to $y$}\}.
\end{equation*}
See Figure \ref{fppfig} for an illustration.  For $x,y\in\mathbb{C}$, let $T(x,y):=T(\{x'\},\{y'\})$,
where $x'$ (resp. $y'$) is the vertex in $V(\mathbb{T})$ closest to
$x$ (resp. $y$).  Any possible ambiguity can be avoided by ordering
$V(\mathbb{T})$ and taking the vertex in $V(\mathbb{T})$ smallest for
this order.

It follows from the subadditive ergodic theorem that, for any
$z\in\mathbb{C}$, there is a constant $\mu_p(z)$ such that
\begin{equation}\label{e28}
\lim_{n\rightarrow\infty}\frac{T(0,nz)}{n}=\mu_p(z)\qquad\mbox{$\mathbf{P}_p$-a.s.
and in $L^1$}.
\end{equation}
We call $\mu_p(z)$ the \textbf{time constant}.  It is well known (see, e.g., Theorem 6.1 in \cite{Kes86}) that
\begin{equation}\label{e16}
\mu_p(1)=0\quad\mbox{if and only if}\quad p\leq p_c.
\end{equation}
Using (\ref{e28}) and (\ref{e16}), it is easy to deduce that
$\mu_p$ is a norm on $\mathbb{C}$ for each fixed $p>p_c$.

The fundamental object of study is the random set
\begin{equation*}
B(t):=\{z\in\mathbb{C}:T(0,z)\leq t\}.
\end{equation*}
The unit ball in the $\mu_p$-norm is called the \textbf{limit shape} and is denoted by
\begin{equation*}
\mathcal {B}_p:=\{z\in\mathbb{C}:\mu_p(z)\leq 1\}.
\end{equation*}
It is a convex, compact set with non-empty interior, and is the limit of $B(t)$ in the following sense (see, e.g., Theorem 2.17 in \cite{ADH17} for the Cox-Durrett shape theorem): For each $p\in(p_c,1]$ and each $\epsilon\in(0,1)$,
\begin{equation}
\mathbf{P}_p\left[(1-\epsilon)\mathcal {B}_p\subset
\frac{B(t)}{t}\subset(1+\epsilon)\mathcal {B}_p\mbox{ for all large
}t\right]=1.
\end{equation}

The following result is a crucial ingredient of the proof of Theorem \ref{t1}.
\begin{theorem}[Corollary 1.2 in \cite{Yao22}]\label{t2}
There exists a constant $\nu>0$, such that
\begin{equation*}
\lim_{p\downarrow p_c}L(p)\mu_p(z)=\nu\quad\mbox{uniformly in $z\in\mathbb{C}$ with $|z|=1$.}
\end{equation*}
In particular, when $p\downarrow p_c$, the normalized limit shape
$L(p)^{-1}\mathcal {B}_p$ converges in the Hausdorff metric to the Euclidean disk $\mathbb{D}_{1/\nu}$.
\end{theorem}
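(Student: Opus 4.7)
The plan is to combine three ingredients: Cox--Durrett style concentration of FPP passage times, matching bounds $\mu_p(z) \asymp |z|/L(p)$ coming from near-critical RSW and arm-exponent estimates, and most importantly the rotational invariance of the near-critical scaling limit of site percolation on $\mathbb{T}$ established by Garban, Pete and Schramm \cite{GPS18}. I would proceed in three stages.

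First, I would control the order of $\mu_p$ near criticality. The upper bound $\mu_p(z)\le C|z|/L(p)$ is obtained by stitching together open (yellow) crossings of overlapping boxes of side $L(p)$; such crossings exist with probability bounded away from $0$ by definition of $L(p)$, so one can travel distance $|z|L(p)$ while traversing on average only $O(1)$ closed (blue) vertices. The matching lower bound $\mu_p(z)\ge c|z|/L(p)$ comes from a dual argument: any path from $0$ to $nz$ must cross of order $n/L(p)$ disjoint boxes of side $L(p)$, and in each such near-critical box the minimum number of closed vertices on a crossing is bounded below via polychromatic arm exponents. This step also gives an equicontinuity bound on the family $\{L(p)\mu_p\}_{p>p_c}$ restricted to the Euclidean unit circle.

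Second, I would use the \cite{GPS18} convergence to show that $L(p)^{-1}T(0,L(p)z)$ converges in distribution to a functional $\mathcal{T}(z)$ of the scaling limit. The natural candidate is the infimum, over continuum rectifiable paths from $0$ to $z$, of an additive ``blue mass'' of the path with respect to the intensity of macroscopic near-critical blue clusters surviving in the scaling limit. Kesten--Talagrand type variance bounds, adapted to near-critical site percolation, would then yield $L(p)\mu_p(z)\to\mathbb{E}[\mathcal{T}(z)]=:f(z)$. Rotational invariance of the GPS scaling limit forces $f$ to be rotationally invariant; combined with positive homogeneity (inherited from each $\mu_p$) this gives $f(z)=\nu|z|$ for a single $\nu>0$, and the explicit representation via $\mathcal{T}$ pins down the value of $\nu$, ruling out different subsequential limits.

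Third, the equicontinuity from the first stage promotes the pointwise convergence on the unit circle to the uniform convergence asserted in the theorem, and the Hausdorff convergence $L(p)^{-1}\mathcal{B}_p\to\mathbb{D}_{1/\nu}$ is then a standard consequence of uniform convergence of norms on $\mathbb{C}$. The principal obstacle is the second stage: expressing the discrete passage time as a continuous functional of the continuum near-critical object. The GPS scaling limit is a rather abstract measure on colored quads / pivotal ensembles, and turning a minimum over lattice paths into a genuine continuum variational problem requires careful multi-scale control of geodesics at the correlation length — in particular, ruling out thin, long excursions of near-geodesics that could disrupt the passage to the continuum. Once this continuity is in place, the rest of the argument is essentially soft.
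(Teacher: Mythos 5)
First, note that the paper does not prove Theorem \ref{t2} at all: it is imported verbatim as Corollary 1.2 of \cite{Yao22}, so the benchmark is the proof in that paper, which is a substantial piece of work in its own right. Your stage one (matching bounds $\mu_p(z)\asymp |z|/L(p)$ via crossings at scale $L(p)$ and dual blocking circuits) and stage three (homogeneity plus equicontinuity upgrading pointwise to uniform convergence on the unit circle, and then Hausdorff convergence of $L(p)^{-1}\mathcal{B}_p$) are sound but soft; they do not touch the actual content of the theorem, namely the existence of the limit $\lim_{p\downarrow p_c}L(p)\mu_p(z)$ and its direction-independence with a single constant $\nu$.

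That content is your stage two, and there it has genuine gaps. (i) The normalization is off: since $\mu_p(z)\asymp|z|/L(p)$, the quantity $L(p)^{-1}T(0,L(p)z)$ tends to $0$; the relevant objects are $T(0,mL(p)z)/m$ (or expected passage times across many correlation lengths), and one must interchange the limit $m\to\infty$ defining $\mu_p$ with $p\downarrow p_c$, which requires concentration/subadditivity estimates that are uniform in $p$ near $p_c$ — you assert ``Kesten--Talagrand type variance bounds'' but give no indication of how to make them uniform in the near-critical window. (ii) More fundamentally, your proposed continuum object — an infimum of ``blue mass'' over rectifiable continuum paths — is not known to be a well-defined measurable functional of the \cite{GPS18} scaling limit; that limit is a quad-crossing/pivotal-measure structure, and constructing a continuum FPP-type variational problem from it is precisely the kind of result that is not available and that you yourself flag as the ``principal obstacle'' before declaring the rest ``essentially soft.'' The proof in \cite{Yao22} avoids this entirely: it exploits the exact combinatorial duality for Bernoulli passage times (passage time equals the maximal number of disjoint closed circuits, respectively disjoint crossings of a cylinder, separating the endpoints — the Proposition 2.5 of \cite{Yao22} that the present paper also uses in Lemmas \ref{l1} and \ref{l3}), so that the quantities to be controlled are circuit/crossing counts at scale $L(p)$, which are directly expressible in terms of crossing events and hence of the near-critical scaling limit, whose rotational invariance then yields the direction-independent constant $\nu$. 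As written, your outline identifies the right ingredients but defers the theorem's core difficulty to an unproven (and arguably misdirected) continuum construction, so it does not constitute a proof.
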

The following key observation enables us to apply Theorem \ref{t2} to our isoperimetric problem.
\begin{proposition}\label{p2}
Suppose $p>p_c$.  The time-constant norm $\mu_p$ is exactly the same as the boundary norm $\beta_p$ (defined in Proposition \ref{p1}).
\end{proposition}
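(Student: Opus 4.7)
The plan is to show the two inequalities $\mu_p(z) \leq \beta_p(z)$ and $\beta_p(z) \leq \mu_p(z)$ separately, exploiting the planar duality between geodesics and cluster boundaries on the triangular lattice. From Proposition \ref{p1}, the boundary norm $\beta_p(u)$ (for a unit vector $u$) arises as the almost sure limit of $n^{-1}|\partial_{\mathcal{C}_p^\infty} R_n|$, where $R_n$ is a near-optimal right-most path in $\mathcal{C}_p^\infty$ spanning a strip of width $n$ transverse to $u$. The time constant $\mu_p(u)$, on the other hand, is the limit of $n^{-1} T(0, nu)$, i.e., the minimum number of open vertices on any lattice path in direction $u$. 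The equality asserts that counting open vertices in the geodesic direction is asymptotically the same as counting vertex-boundary sites of a right-most path in the perpendicular direction, a fact that naturally reflects the hexagon matching/duality for site percolation on $\mathbb{T}$.

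For the upper bound $\mu_p(u) \leq \beta_p(u)$, I fix a near-optimal right-most path $R_n \subset \mathcal{C}_p^\infty$ spanning a strip transverse to $u$, with $|\partial_{\mathcal{C}_p^\infty} R_n| \leq (\beta_p(u) + \varepsilon) n$, and set $B_n := \partial_{\mathcal{C}_p^\infty} R_n$. I then build a path $\pi$ in $\mathbb{T}$ from $0$ to $nu$, crossing $R_n$ transversally, that stays within $V(\mathbb{T}) \setminus \mathcal{C}_p^\infty$ whenever possible and uses open vertices only when forced to traverse $R_n$, routing each such crossing through a single vertex of $B_n$. Since closed clusters are finite in the supercritical regime, such a path exists with total open-vertex count at most $|B_n| + o(n)$, giving $T(0, nu) \leq (\beta_p(u) + \varepsilon) n + o(n)$ and hence $\mu_p(u) \leq \beta_p(u)$. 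For the reverse inequality, I take a near-geodesic $\pi_n$ realizing $T(0, nu)$ and extract from it a right-most path $R$ in $\mathcal{C}_p^\infty$ transverse to $u$, so that each open vertex on $\pi_n$ witnesses a segment of the interface between $\mathcal{C}_p^\infty$ and its complement; concatenating these interface segments yields a right-most path whose vertex-boundary count is bounded above by the passage time of $\pi_n$ plus $o(n)$.

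The main obstacle I expect is the combinatorial bookkeeping in these two dual constructions, particularly controlling the sublinear error terms from endpoint corrections, transverse fluctuations of right-most paths, and the gluing of routes across distinct closed-cluster components. These are controlled by standard supercritical inputs (exponential decay of closed-cluster diameters on $\mathbb{T}$) together with the right-most path regeneration and renormalization arguments of \cite{BLPR15}, transferred to the site-percolation setting. Once both inequalities are established along unit vectors, the equality $\mu_p = \beta_p$ extends to all of $\mathbb{R}^2$ by homogeneity of the two norms.
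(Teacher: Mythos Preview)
Your proposal rests on a misreading of the definition of $\beta_p$. The right-most path in Proposition~\ref{p1} runs \emph{from $\widetilde{0}$ to $\widetilde{nx}$}, i.e., in the \emph{same} direction $x$ as the FPP geodesic, not transverse to it; and the quantity $\mathbf{b}(\gamma)$ counts the open vertices in the \emph{right boundary} $\partial^+\gamma$ (the vertices immediately to the right of $\gamma$ along its interface $\partial^*\gamma$), not $|\partial_{\mathcal{C}_p^\infty}\gamma|$. So the picture of a right-most path ``spanning a strip of width $n$ transverse to $u$'' and a geodesic ``crossing $R_n$ transversally'' does not match the objects at hand, and both of your constructions collapse once the correct orientation is restored. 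In particular, your argument for $\mu_p\leq\beta_p$ (routing a path through closed components and paying one open vertex per crossing of $R_n$) has no obvious meaning when $R_n$ and the geodesic are parallel rather than orthogonal.

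The paper's actual proof keeps both paths in the same direction and exploits two different duality facts. For $\beta_p\geq\mu_p$ (Lemma~\ref{l1}): any open right-most path from $\widetilde{0}$ to $\widetilde{n}$ has an associated interface $\partial^*\gamma$ that must cross every open Jordan circuit separating $\widetilde{0}$ from $\widetilde{n}$, contributing an open vertex to $\partial^+\gamma$ each time; combined with the circuits--passage-time duality $N(u,v)\geq T(u,v)-2$ from \cite{Yao22}, this gives $b(\widetilde{0},\widetilde{n})\gtrsim T(0,n)$. For $\beta_p\leq\mu_p$ (Lemma~\ref{l2}): take a box-crossing geodesic $\gamma_G$ in a thin cylinder with $T(\gamma_G)\leq(1+\epsilon)\mu_p(1)n$, surround it from above by an open detour to form a closed region $\mathcal{D}$, then \emph{redeclare the open sites of $\gamma_G$ closed} and read off the induced interface inside $\mathcal{D}$. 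This interface yields an open right-most path $\widehat{\gamma}$ whose right-boundary open vertices (in the original configuration) lie entirely in $\gamma_G$, so $\mathbf{b}(\widehat{\gamma})\leq T(\gamma_G)$. Short $*$-concatenations to $\widetilde{0}$ and $\widetilde{n}$ finish. The key device you are missing is this ``redeclare-and-read-the-interface'' step, which is what converts a geodesic into a right-most path with controlled right boundary.
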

\begin{remark}
It was proved in \cite{GMPT17} that $\varphi_p$ and $\widehat{W}_p$ (for the bond version on $\mathbb{Z}^2$) are continuous in $p\in(p_c,1]$, as mentioned in Remark \ref{r1}; the proof mainly relies on the continuity of $\beta_p$ in $p\in(p_c,1]$, given in Lemma 6.1 of \cite{GMPT17}.  By Proposition \ref{p2}, the continuity of $\beta_p$ follows immediately from the continuity of $\mu_p$, which is a special case of the continuity property of the time constant for general FPP proved by Cox and Kesten \cite{CK81}.
\end{remark}
\begin{remark}
In \cite{BLPR15}, the norm $\beta_p$ was constructed by using right-most paths, specific to dimension $d=2$.  In \cite{Dem20b,Gol18b}, to study isoperimetric problems for bond percolation in higher dimensions,  the authors used minimal cutsets in boxes or maximal flows through boxes to construct a suitable norm (corresponding to the surface tension in the percolation setting) on $\mathbb{R}^d, d\geq 2$.  It was noted in \cite{Gol18b} that when $d=2$, such cutsets are dual to paths and fall within the realm of FPP.  However, in these papers it was not stated that $\beta_p$ is in fact equal to $\mu_p$.
\end{remark}

\subsection{Related work and an open problem}
Besides the Wulff crystal $W_p$ and the limit shape $\mathcal {B}_p$, some other large random sets of site percolation on $\mathbb{T}$, when suitably normalized, also converge to a Euclidean disk as $p$ goes to $p_c$.  A related result and a conjecture are as follows.
\begin{itemize}
\item Duminil-Copin \cite{DC13} used the scaling limit of
near-critical percolation obtained in \cite{GPS18} to show that
the Wulff crystal (different from the one studied in the present paper) for subcritical percolation on $\mathbb{T}$
converges to a Euclidean disk as $p\uparrow p_c$.  Roughly
speaking, he proved that the typical shape of a cluster
conditioned to be large becomes round, when $p\uparrow p_c$.
\item Benjamini \cite{Ben10} made the following conjecture: Suppose $p>p_c$.  Condition that $0$ is in the infinite cluster. Consider the balls centered at 0 with radius $n$ for the graph distance on the infinite cluster.  Then the limiting shape of these balls becomes round as $p\downarrow p_c$.  This conjecture was restated by Duminil-Copin in \cite{DC13}.  It was noted below Conjecture 3.3 in \cite{Ben13} that the question seems hard, since metric properties do not follow from conformal geometry.
\end{itemize}

\subsection{Outline}
The rest of the paper is organized as follows.  In Section \ref{norm}, we introduce the notion of right-most paths, study geometric properties of such paths and define the boundary norm $\beta_p$.  In Section \ref{adapt}, we sketch the proofs of Theorems \ref{t4} and \ref{t3} by using the method from \cite{BLPR15}.  Section \ref{normequal} is devoted to the proof of the norm-equality observation, Proposition \ref{p2}.  In Section \ref{theorem}, we prove our main result, Theorem \ref{t1}.

\section{The boundary norm}\label{norm}
In this section we define the boundary norm $\beta_p$ that is used to construct the Wulff crystal $W_p$.  To define $\beta_p$, we require the notion of
right-most paths, which will be used to characterize the outer boundary of a finite subgraph of $\mathbb{T}$.  These two notions were first introduced in \cite{BLPR15} for bond percolation on $\mathbb{Z}^2$, and our definitions are adapted to site percolation on $\mathbb{T}$.  In Section \ref{def} we introduce the notions. In Section \ref{geo} we then give geometric properties of right-most paths.  Section \ref{normdefine} explains that Proposition \ref{p1}, concerning the definition of $\beta_p$, can be obtained by adapting the proofs of \cite{BLPR15} to our setting.

\subsection{Definitions of right-most paths and the boundary norm}\label{def}
Let $\overrightarrow{E}(\mathbb{T})$ be the oriented version of the edge set $E(\mathbb{T})$, where each edge $(u,v)$ in $E(\mathbb{T})$ is replaced by two oriented
edges $\langle u,v\rangle$ and $\langle v,u\rangle$.  For an oriented edge $\langle u,v\rangle$ from $u$ to $v$, we call $v$ (resp. $u$) the \textbf{head} (resp. \textbf{tail}) of $\langle u,v\rangle$.  A path in $\mathbb{T}$ is called \textbf{simple} if it traverses each edge in $\overrightarrow{E}(\mathbb{T})$ at most once.  Consider a path $\gamma=(v_0,\ldots,v_n)$ in $\mathbb{T}$.  If $v_0=v_n$, then the path is called a \textbf{circuit}; in this case we identify indices modulo $n$.  When $v_{i-1}$ and $v_{i+1}$ are well defined, the \textbf{right-boundary edges} at $v_i$ are obtained by listing all oriented
edges which start at $v_i$, beginning with but not including $\langle v_i,v_{i-1}\rangle$, proceeding in a
counterclockwise order and ending with but not including $\langle v_i,v_{i+1}\rangle$.  When $v_{i-1}$ or $v_{i+1}$ is not well defined, the set of right-boundary edges at $v_i$ is defined to be empty.  The \textbf{right-boundary vertices} of $v_i$ are the heads of the right-boundary edges at $v_i$.
The \textbf{right boundary} $\partial^+\gamma$ of $\gamma$ is the set of all right-boundary vertices of all vertices of $\gamma$.  Let $\partial^+\gamma(v_i)$ denote the set of right-boundary vertices of $v_i$ along $\gamma$.  The path $\gamma$ is called \textbf{right-most} if it is simple and does not use any vertex in $\partial^+\gamma$ and, moreover, for each $v_i$ with $v_{i-1}$ and $v_{i+1}$ well defined, $v_i$ has at least one right-boundary vertex.  See Figure \ref{rightmostpath} (b) for an illustration of the above notions; for a comparison, see also Figure \ref{rightmostpath} (a) that illustrates analogous notions in \cite{BLPR15} for bond percolation on $\mathbb{Z}^2$.

\begin{figure}
\centering
\includegraphics[height=0.35\textwidth]{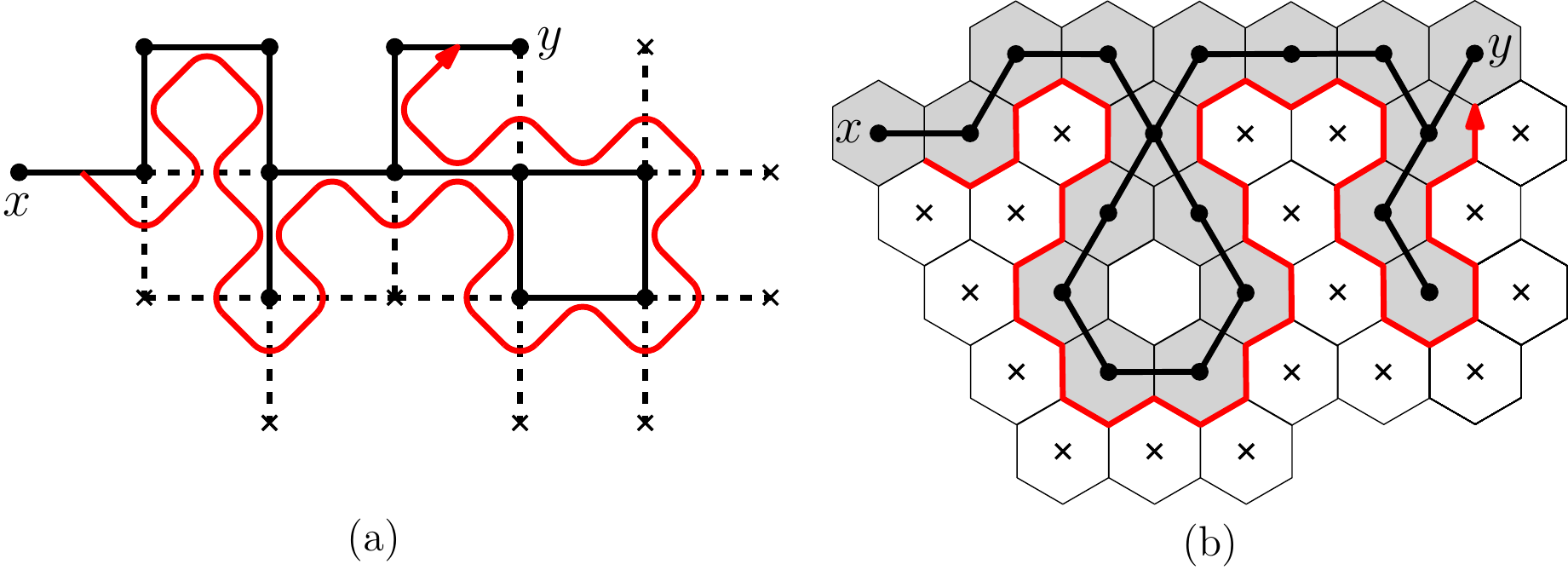}
\caption{(a): A right-most path (solid edges) from $x$ to $y$ on $\mathbb{Z}^2$.  The
dashed edges represent the right edge-boundary of this path.  The red curve in the medial graph represents the associated interface.  See
\cite{BLPR15} for more details.  (b): A right-most path $\gamma$ from $x$ to $y$ on $\mathbb{T}$.  The black bullets represent the vertices of $\gamma$, with the corresponding hexagons colored gray.  The black lines connecting the bullets represent the edges traversed by $\gamma$.  The small crosses represent the right boundary $\partial^+\gamma$ of $\gamma$.  The red curve is the associated interface $\partial^*\gamma$. }\label{rightmostpath}
\end{figure}

For $x,y\in V(\mathbb{T})$, let $\mathscr{R}(x,y)$ denote the set of all right-most paths from $x$ to $y$.  Given a right-most path $\gamma$, set
\begin{equation*}
\mathbf{b}(\gamma):=|\{v\in\partial^+\gamma:\mbox{ $v$ is open}\}|.
\end{equation*}
For two vertices $x,y$ that are in the same open cluster, we define the \textbf{right-boundary distance} from $x$ to $y$ by
\begin{equation*}
b(x,y):=\inf\{\mathbf{b}(\gamma):\gamma\in\mathscr{R}(x,y)\mbox{ and }\mbox{$\gamma$ is open}\}.
\end{equation*}
For each $x\in\mathbb{R}^2$, we let $\widetilde{x}=\widetilde{x}^{\mathcal{C}_p^{\infty}}$ denote the vertex of $\mathcal{C}_p^{\infty}$ that is nearest to $x$ in the $\ell^{\infty}$-norm, taking the smallest one in the lexicographic ordering of the differences between the vertices and $x$ in case there is a tie.  We now use right-most paths to define the norm $\beta_p$ on $\mathbb{R}^2$ (see Section \ref{normdefine} for a sketch of the proof):
\begin{proposition}[The boundary norm, see Theorem 2.1 and Proposition 2.2 in \cite{BLPR15} for the analogs]\label{p1}
Suppose $p>p_c$.  There exists a norm $\beta_p$ on $\mathbb{R}^2$ such that for any $x\in\mathbb{R}^2$,
\begin{equation*}
\beta_p(x):=\lim_{n\rightarrow\infty}\frac{b(\widetilde{0},\widetilde{nx})}{n}\quad\mbox{$\mathbf{P}_p$-a.s. and in $L^1(\mathbf{P}_p)$.}
\end{equation*}
Moreover, the convergence is uniform on
$\{x\in\mathbb{R}^2:\|x\|_2=1\}$.  Furthermore, the norm $\beta_p$ is invariant under symmetries of $\mathbb{T}$ that fix the origin.
\end{proposition}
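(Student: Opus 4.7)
The plan is to define $\beta_p$ on rational directions via Kingman's subadditive ergodic theorem applied to $b(\widetilde{0},\widetilde{nx})$, and then to extend the resulting function to $\mathbb{R}^2$ and verify it is a norm with the required symmetries. The overall structure mirrors the classical shape-theorem proof for first-passage percolation, with the added technical difficulty that the ``right-most'' constraint on paths is not cleanly preserved under concatenation.

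I would begin with two preliminary ingredients. First, integrability: for each $x\in V(\mathbb{T})$, $\mathbf{E}_p[b(\widetilde{0},\widetilde{x})]<\infty$. A concrete path suffices: take a chemical-distance geodesic in $\mathcal{C}_p^\infty$ from $\widetilde{0}$ to $\widetilde{x}$ (whose length has an exponential tail in the supercritical regime) and then locally modify it so as to be simple and right-most; then $\mathbf{b}(\cdot)$ is trivially bounded by the resulting path length. Second, approximate subadditivity: concatenating an open right-most path from $\widetilde{0}$ to $\widetilde{mx}$ with one from $\widetilde{mx}$ to $\widetilde{(m+n)x}$ does not automatically produce a right-most path, since simplicity may fail near the junction and the right boundary can shift, but a local surgery near $\widetilde{mx}$ produces a valid concatenation at the cost of an integrable correction depending only on the configuration in a bounded neighborhood of $\widetilde{mx}$. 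Together with translation invariance of $\mathbf{P}_p$, this provides the hypotheses of Kingman's theorem and yields $\mathbf{P}_p$-a.s.\ and $L^1$ convergence to a deterministic limit $\beta_p(x)$ for every $x\in\mathbb{Q}^2$.

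Next I would verify the norm properties. Positive homogeneity in rational scalars is immediate; the triangle inequality $\beta_p(x+y)\leq\beta_p(x)+\beta_p(y)$ follows from the same concatenation argument used for subadditivity. Invariance of $\beta_p$ under the point symmetries of $\mathbb{T}$ fixing the origin comes from the fact that $\mathbf{P}_p$ and the right-most path construction are both equivariant under these symmetries; a reflection turns right-most paths into left-most paths, but the distributional count of open boundary vertices is invariant. The crucial, and in my view hardest, step is \emph{positivity}: $\beta_p(x)>0$ for $x\neq 0$. Following the strategy of \cite{BLPR15}, one associates to each right-most open path $\gamma$ the interface $\partial^*\gamma$ in the hexagonal medial picture (cf.\ Figure \ref{rightmostpath}(b)) separating $\gamma$ from the blue hexagons on its right. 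If a positive fraction of right-boundary vertices were closed, the interface would be shadowed by long dual (closed) connections; in the supercritical regime, uniqueness of $\mathcal{C}_p^\infty$ together with exponential decay of the radii of finite dual clusters forces such connections to be rare, so the open right-boundary density must be bounded below, giving the linear lower bound $b(\widetilde{0},\widetilde{nx})\geq cn$.

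Finally, one extends $\beta_p$ from $\mathbb{Q}^2$ to $\mathbb{R}^2$ by continuity: positive homogeneity and the triangle inequality make the rational-valued limit Lipschitz in any fixed norm, so it has a unique extension to a norm on $\mathbb{R}^2$. Uniform convergence on $\{x\in\mathbb{R}^2:\|x\|_2=1\}$ then follows by a standard shape-theorem argument — obtain pointwise convergence on a finite $\delta$-net, control the oscillation of $n^{-1}b(\widetilde{0},\widetilde{nx})$ over $\delta$-balls via the triangle inequality and approximate subadditivity, and let $\delta\downarrow 0$. The main obstacle throughout is that right-most paths are not closed under concatenation in a clean way, so careful local-surgery schemes are needed both for the subadditivity input to Kingman's theorem and for the positivity lower bound.
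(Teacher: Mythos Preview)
Your outline is correct and follows the same route as the paper (which itself largely defers to \cite{BLPR15}): apply the subadditive ergodic theorem along lattice directions using concatenation of open right-most paths, then extend to a norm on $\mathbb{R}^2$ and upgrade to uniform convergence on the unit circle. Two places where the paper sharpens what you sketch are worth noting. First, the $*$-concatenation (Lemma~\ref{l6}) incurs a \emph{bounded} error---at most $8$ new right-boundary vertices---rather than a merely integrable one; this makes both the integrability and the subadditivity inputs to Kingman's theorem immediate once one has Lemmas~\ref{l5} and~\ref{l7}. Second, your symmetry argument has a small gap: the lexicographic tie-breaker defining $\widetilde{x}$ is not equivariant under all symmetries of $\mathbb{T}$ fixing the origin, so invariance of $\beta_p$ does not follow directly from equivariance of $\mathbf{P}_p$ and of the right-most construction. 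The paper resolves this by first running the argument with a randomized tie-breaker $[x]$ (auxiliary i.i.d.\ uniforms, under which the whole construction is manifestly symmetric), obtaining a norm $\beta_p'$, and then showing via Lemmas~\ref{l5}, \ref{l6} and~\ref{l7} that $|b(\widetilde{0},\widetilde{nx})-b([0],[nx])|=o(n)$ with high probability, so that $\beta_p=\beta_p'$.
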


\subsection{Geometry of right-most paths}\label{geo}
In this subsection we give some basic properties of right-most paths, which are analogs of the results in Section 2.2 of \cite{BLPR15}, with modifications adapted to our model.

To study the geometry of right-most paths on $\mathbb{T}$, we shall consider the dual of $\mathbb{T}$, the hexagonal lattice $\mathbb{H}$.  For each oriented edge $e$ of $\mathbb{T}$, we denote by $e^*$ its \textbf{dual edge} in $\mathbb{H}$, oriented so that the head of $e$ is on the right of $e^*$.  An \textbf{interface} is a sequence $(e_1^*,\ldots,e_n^*)$ of distinct oriented edges of $\mathbb{H}$ such that the head of $e_i^*$ is equal to the tail of $e_{i+1}^*$ for all $i=1,\ldots,n-1$ and, moreover, there exists a gray-white coloring of the hexagons of $\mathbb{H}$ so that for each edge in this sequence the hexagon on its left is gray and the hexagon on its right is white.  So an interface can be viewed as a simple curve in $\mathbb{R}^2$ and a piece of topological boundary of a cluster (considered as a union of hexagons) when we assign a suitable two-coloring of the faces of $\mathbb{H}$.  For an interface $\Gamma=(e_1^*,\ldots,e_n^*)$, let $\partial^+\Gamma$ denote the set of the heads of the primal edges $e_1,\ldots,e_n$.

Let $\gamma=(v_0,\ldots,v_n)$ be a right-most path with $n\geq 2$.  For each $1\leq i\leq n$, we list the right-boundary edges at $v_i$ in a counterclockwise order that begins with but not including $\langle v_i,v_{i-1}\rangle$, obtaining a sequence $(e_{j_{i-1}+1},\ldots,e_{j_i})$ with $j_0:=0$.  We associate to $\gamma$ the interface $\partial^*\gamma$ as follows (see Figure \ref{rightmostpath} (b)):
\begin{equation}\label{e19}
\partial^*\gamma:=\left\{
\begin{aligned}
&(e_{1}^*,\ldots,e_{j_1}^*,\ldots,e_{j_{n-2}+1}^*,\ldots,e_{j_{n-1}}^*)\quad\mbox{if $v_n\neq v_0$ (i.e., $\gamma$ is not a circuit),}\\
&(e_{1}^*,\ldots,e_{j_1}^*,\ldots,e_{j_{n-1}+1}^*,\ldots,e_{j_{n}}^*)\quad\mbox{otherwise (i.e., $\gamma$ is a circuit).}
\end{aligned}
\right.
\end{equation}
It is easy to check that $\partial^*\gamma$ is indeed an interface; see (\ref{item1}) of Proposition \ref{p4}.

A \textbf{cycle} is an interface with the head of its last edge equal to the tail of its first edge. (Note that this terminology differs from that of graph theory.)  For a cycle $\Gamma$ that surrounds at least two hexagons, a vertex $v$ in $\Gamma$ can be either of two types, according to whether the edge incident to $v$ that is not in (the unoriented version of) $\Gamma$ belongs to a hexagon surrounded by $\Gamma$ or not.  We call a vertex of the first (resp. second) type an \textbf{internal vertex} (resp. \textbf{external vertex}).  Note that an external vertex was called an e-vertex in \cite{CN06,CN07}, where ``e'' means ``external'' or ``exposed''.

\begin{figure}
\centering
\includegraphics[height=0.5\textwidth]{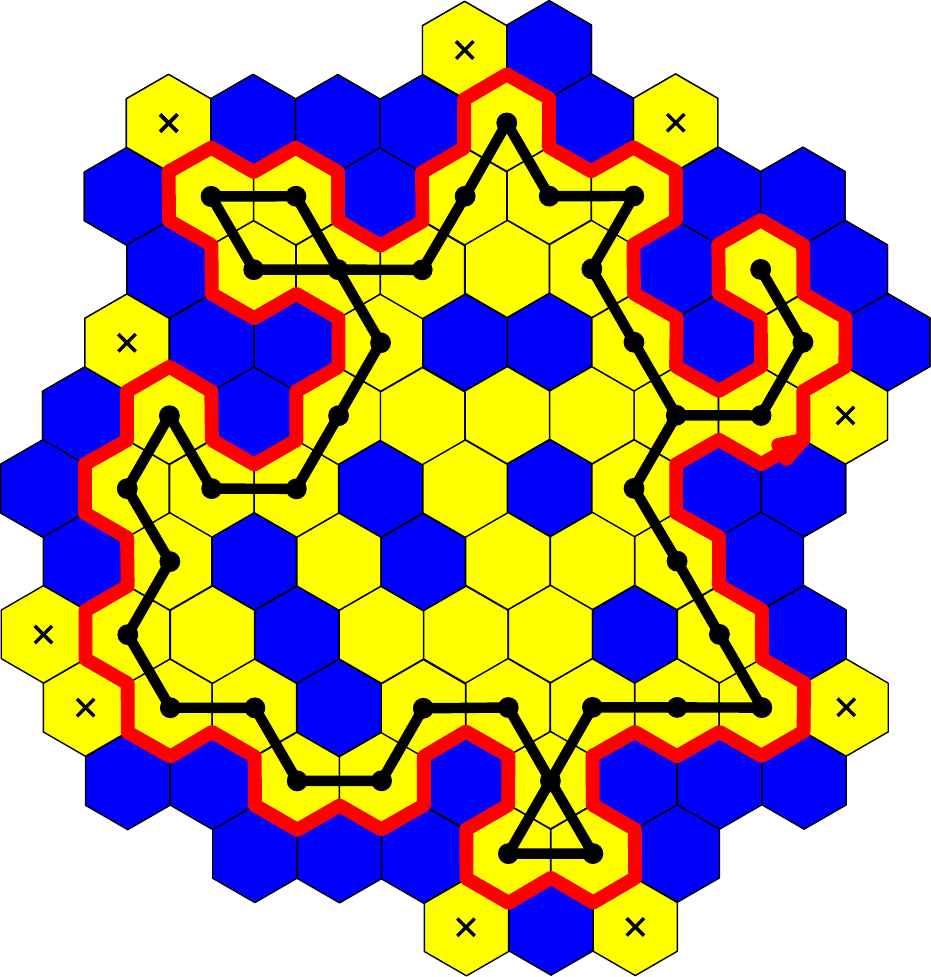}
\caption{A counterclockwise outer boundary interface $\Gamma$ (red curve) of a
finite connected subgraph $G$ of $\mathbb{T}$, with $V(G)$ represented by yellow hexagons inside $\Gamma$.  The black bullets represent the vertices of the inner vertex boundary $\partial^{i}G$ of $G$, which are also the vertices of the right-most circuit $\gamma$ satisfying $\partial^*\gamma=\Gamma$.  The black lines connecting the bullets represent the edges traversed by $\gamma$.  The small crosses represent the open vertices in $\partial^+\gamma$.  Here, $\mathbf{b}(\gamma)=10$.}\label{wulff}
\end{figure}

For a finite connected subgraph $G$ of $\mathbb{T}$, let $\mathcal{H}(G)$ denote the region that is the union of the hexagons of $\mathbb{H}$ centered at $V(G)$.  Then there is a cycle in $\mathbb{H}$ that goes around $\mathcal{H}(G)$ in the counterclockwise direction and coincides with the topological outer boundary of $\mathcal{H}(G)$; see Figure \ref{wulff}.  If the starting point of such a cycle is an internal vertex, then we call this cycle a \textbf{counterclockwise outer boundary interface} of $G$.  A \textbf{clockwise outer boundary interface} of $G$ is defined analogously, with its starting point being an external vertex.  The \textbf{outer vertex boundary} $\partial^{o}G$ and \textbf{inner vertex boundary} $\partial^{i}G$ of $G$ are defined respectively by
\begin{align*}
&\partial^{o}G:=\left\{v\in\partial_{\mathbb{T}} G:\begin{aligned}
&\mbox{there exists a path on $\mathbb{T}$ from $v$ to $\infty$
such that}\\
&\mbox{the only vertex of this path in $V(G)\cup\partial_{\mathbb{T}} G$ is $v$}
\end{aligned}
\right\}\mbox{ and}\\
&\partial^{i}G:=\{v\in V(G): v\mbox{ is ajacent to a vertex in }\partial^{o}G\}.
\end{align*}
The proposition below allows us to use right-most paths to study the ``shape'' of $G$, in particular, (\ref{item3}) says that there are two right-most circuits that lie respectively in $\partial^{i}G$ and $\partial^{o}G$, characterizing equally well the outer shape of $G$.
\begin{proposition}\label{p4}
Right-most paths and interfaces have the following properties:
\begin{enumerate}[(i)]
\item Let $\gamma$ be a right-most path with $|\gamma|\geq 2$.  Then the associated $\partial^*\gamma$ is an interface, and is a cycle when $\gamma$ is a circuit.  Moreover, $\partial^+\gamma=\partial^+(\partial^*\gamma)$.  Furthermore, for any right-most path $\gamma'\neq\gamma$ with $|\gamma'|\geq 2$, we have $\partial^*\gamma'\neq\partial^*\gamma$.\label{item1}
\item Let $\Gamma$ be an interface with the tail of its first edge and the head of its last edge not on the same hexagon in $\mathbb{H}$.  Then there is a unique right-most path $\gamma$ such that $\partial^*\gamma=\Gamma$.\label{item2}
\item Let $\Gamma$ (resp. $\Gamma'$) be a counterclockwise (resp. clockwise) outer boundary interface of a finite connected subgraph $G$ of $\mathbb{T}$ with $V(G)\geq 2$.  Then there is a unique right-most circuit $\gamma$ such that $\partial^*\gamma=\Gamma$.  Moreover, $\partial^+\gamma=\partial^o G$ and the set of vertices of $\gamma$ is equal to $\partial^i G$.  Similarly, there is a unique right-most circuit $\gamma'$ such that $\partial^*\gamma'=\Gamma'$.  Moreover, $\partial^+\gamma'=\partial^i G$ and the set of vertices of $\gamma'$ is equal to $\partial^o G$. \label{item3}
\end{enumerate}
\end{proposition}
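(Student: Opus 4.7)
The plan is to establish the three parts in order, since each relies on the previous ones. The heart of the argument is a bijective correspondence between right-most paths on $\mathbb{T}$ and interfaces on the dual hexagonal lattice $\mathbb{H}$, analogous to the one in \cite{BLPR15} for $\mathbb{Z}^2$ but with the added complication that vertices of $\mathbb{T}$ have degree six and the interfaces live on the dual.

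For part (\ref{item1}), I would first verify directly from (\ref{e19}) that $\partial^*\gamma$ is a valid interface. Within a single block $(e^*_{j_{i-1}+1},\ldots,e^*_{j_i})$ associated to an interior vertex $v_i$, consecutive dual edges bound the hexagon centered at $v_i$ and share endpoints by the counterclockwise enumeration; the hexagon at $v_i$ sits on the left of each $e^*_j$, giving the required gray-white coloring locally. At the transition from block $i$ to block $i+1$, the last right-boundary edge at $v_i$ and the first at $v_{i+1}$ are both incident to the primal edge $\langle v_i,v_{i+1}\rangle$, and a direct check of hexagon geometry shows that their duals meet head-to-tail with consistent coloring. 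When $\gamma$ is a circuit the cyclic version of (\ref{e19}) turns $\partial^*\gamma$ into a cycle. The identity $\partial^+\gamma=\partial^+(\partial^*\gamma)$ is immediate from the construction, since the heads of the listed right-boundary primal edges at $v_i$ are precisely the right-boundary vertices of $v_i$ along $\gamma$. Injectivity $\gamma\mapsto\partial^*\gamma$ follows because from $\partial^*\gamma$ one reads off the common tail of each block, recovering $v_0,\ldots,v_n$ in order.

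For part (\ref{item2}), given an interface $\Gamma=(e^*_1,\ldots,e^*_m)$, I would invert the construction. The interface condition together with the left-gray condition forces the local geometry at each transition: either the primal edges $e_j$ and $e_{j+1}$ share the same tail (so they are consecutive right-boundary edges at one primal vertex and their duals continue around one hexagon) or the tail of $e_{j+1}$ equals the head of $e_j$ (so we move to the next vertex). Grouping the $e_j$'s into maximal runs with the same tail yields a sequence of vertices $v_0,v_1,\ldots,v_n$, and I set $\gamma:=(v_0,\ldots,v_n)$. The endpoint hypothesis (the tail of $e^*_1$ and the head of $e^*_m$ lie on different hexagons) guarantees $v_0\neq v_n$ so $\gamma$ is a genuine path; that $\gamma$ is right-most requires checking that it is simple (from distinctness of the dual edges in $\Gamma$), that it avoids $\partial^+\gamma$ (from the fact that each hexagon has a fixed color), and that each interior vertex has at least one right-boundary vertex (each block contains at least one edge). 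Uniqueness follows from the injectivity already established in (\ref{item1}).

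For part (\ref{item3}), I would use topology. A counterclockwise outer boundary interface $\Gamma$ of $G$ is a cycle in $\mathbb{H}$ enclosing $\mathcal{H}(G)$ with gray hexagons (those centered at $V(G)\cap\partial^iG$) on the left; since its starting point is an internal vertex, the first and last edges of $\Gamma$ emerge from the same primal vertex, so the cyclic version of (\ref{item2}) applies and produces a unique right-most circuit $\gamma$ with $\partial^*\gamma=\Gamma$. The vertices of $\gamma$ are exactly the common tails of the primal edges, which are the vertices of $G$ adjacent to the complement of $\mathcal{H}(G)$, i.e.\ $\partial^iG$. The set $\partial^+\gamma$ consists of the heads of those primal edges, which are the vertices outside $G$ adjacent to $\partial^iG$ that are reachable from infinity in $\mathbb{T}\setminus V(G)$ by a path disjoint from $V(G)\cup\partial_{\mathbb{T}}G$ except at the endpoint (the Jordan curve theorem applied to $\Gamma$ gives this reachability), and this is precisely $\partial^oG$. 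The clockwise case is symmetric, with the roles of gray and white hexagons and the roles of $\partial^iG$ and $\partial^oG$ interchanged.

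The main obstacle is the careful local verification in (\ref{item1}) and (\ref{item2}) that the right-boundary edges of consecutive primal vertices fit together across transitions to form a coherent interface with a consistent two-coloring. This is a finite but somewhat delicate case analysis around each primal vertex, where it is easy to make off-by-one errors in the counterclockwise enumeration or to mishandle the endpoint conditions. A secondary difficulty in (\ref{item3}) is identifying $\partial^+\gamma$ with $\partial^oG$, which requires the planar topological fact that the outer boundary interface separates $\mathcal{H}(G)$ from the unbounded component of its complement.
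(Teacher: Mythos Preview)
Your approach is essentially the same as the paper's: verify directly that $\partial^*$ produces an interface, invert the construction by reading off the hexagons on the left of $\Gamma$, and then specialize to outer boundary cycles. However, your sketch of part~(\ref{item2}) contains two genuine slips that would cause the argument to fail as written.

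First, your dichotomy at a transition is wrong. You claim that either $e_j$ and $e_{j+1}$ share a tail, or ``the tail of $e_{j+1}$ equals the head of $e_j$''. In fact, at the transition from $v_i$ to $v_{i+1}$ the last right-boundary edge $e_{j_i}$ at $v_i$ has head equal to the common neighbour of $v_i$ and $v_{i+1}$ on the right, while the tail of $e_{j_i+1}$ is $v_{i+1}$; these are distinct vertices. The correct local statement (and the one the paper uses) is that the hexagon on the \emph{left} of the interface either stays the same or jumps to the adjacent hexagon centred at $v_{i+1}$; equivalently, one tracks the sequence $H_1,H_2,\ldots$ of left-hexagons rather than trying to match primal heads and tails.

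Second, ``grouping the $e_j$'s into maximal runs with the same tail'' only recovers the \emph{interior} vertices of the sought path. By the definition of $\partial^*\gamma$ in (\ref{e19}), for a non-circuit $\gamma=(v_0,\ldots,v_n)$ only the right-boundary edges at $v_1,\ldots,v_{n-1}$ are listed, so $v_0$ and $v_n$ never appear as tails of any primal edge in $\Gamma$. The paper handles this by explicitly adjoining two extra hexagons: $H_0$, the hexagon incident to the tail of $e_1^*$ that does not contain $e_1^*$, and $H_{n+1}$, the hexagon pointed to by $e_m^*$. Without this step your $\gamma$ is two vertices too short and $\partial^*\gamma\neq\Gamma$. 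Once these two points are corrected, the rest of your outline matches the paper's proof.
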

\begin{proof}
(\ref{item1}) Let $\gamma=(v_0,\ldots,v_n)$.  First, we assume that $\gamma$ is not a circuit.  Since $\gamma$ is right-most, for all $i=1,\ldots,n-1$, $v_i$ has at least one right-boundary edge and the set $\{e_{j_{i-1}+1}^*,\ldots,e_{j_i}^*\}$ appearing in (\ref{e19}) is nonempty.  From the fact that $\gamma$ is a simple and does not use any vertex in $\partial^+\gamma$,  it is easy to see that $\partial^*\gamma=(e_{1}^*,\ldots,e_{j_{n-1}}^*)$ is a sequence of distinct oriented edges of $\mathbb{H}$ such that the head of $e_k^*$ is equal to the tail of $e_{k+1}^*$ for all $k=1,\ldots,j_{n-1}-1$ and, moreover, each hexagon centered at a vertex of $\gamma$ is on the left of one edge of $\partial^*\gamma$ but not on the right of any edge of $\partial^*\gamma$, which implies that there is a gray-white coloring of the hexagons of $\mathbb{H}$ so that for each edge in $\partial^*\gamma$ the hexagon on its left is gray and the hexagon on its right is white.  Thus, $\partial^*\gamma$ is indeed an interface.  Now assume that $\gamma$ is a circuit.  Similarly as above, one can prove that $\partial^*\gamma=(e_{1}^*,\ldots,e_{j_{n}}^*)$ is an interface.  Furthermore, it is easy to check that the head of $e_{j_{n}}^*$ is equal to the tail of $e_1^*$ since $v_n=v_0$, which means that $\partial^*\gamma$ is a cycle.  The fact that the sets $\partial^+\gamma$ and $\partial^+(\partial^*\gamma)$ are the same follows immediately from their definitions.

Let $\gamma'=(v_0',\ldots,v_m')$.  Similarly to (\ref{e19}), write $\partial^*\gamma'=(e_1'^*,\ldots,e_{j_{m-1}}'^*)$ if $v_m'\neq v_0'$, and $\partial^*\gamma'=(e_1'^*,\ldots,e_{j_m}'^*)$ otherwise.  Since $\gamma'\neq\gamma$, either $\gamma'=(v_0,\ldots,v_m)$ with $m<n$, or there is some $0\leq k\leq m$ such that $v_k'\neq v_k$ and $(v_0',\ldots,v_{k-1}')=(v_0,\ldots,v_{k-1})$.   In the former case, we have $e_{j_m}^*\neq e_{j_m}'^*$ (let $e_{j_m}'^*=\emptyset$ if $v_0'\neq v_m'$).  In the latter case, if $k=0$, then $e_1'^*\neq e_1^*$; otherwise, $e_{j_{k-1}}'^*\neq e_{j_{k-1}}^*$.  Therefore, $\partial^*\gamma'\neq\partial^*\gamma$.

(\ref{item2}) Let $\Gamma=(e_1^*,\ldots,e_m^*)$.  Note that the tail of $e_1^*$ is the common point of three hexagons of $\mathbb{H}$, in which we choose the hexagon that does not contain $e_1^*$ and denote it by $H_0$.  Let $H_1$ be the hexagon on the left of $e_1^*$.  If $H_1$ is also on the left of $e_2^*,\ldots,e_{j_1}^*$ but not on the left of $e_{j_1+1}^*$, then we let $H_2$ be the hexagon on the left of $e_{j_1+1}$.  We continue to construct the hexagons $H_3,\ldots,H_n$, similarly as above, with $H_n$ on the left of $e_{j_{n-1}+1}^*,\ldots,e_{j_n}^*=e_m^*$.  Denote by $H_{n+1}$ the hexagon pointed to by $e_m^*$.  Then we define $\gamma=(v_0,\ldots,v_{n+1})$, where $v_i$ is the vertex at the center of $H_i$ for $0\leq i\leq n+1$.  From the construction, it is clear that $\gamma$ is a path. The fact that the tail of $e_1^*$ and the head of $e_m^*$ are not on the same hexagon implies that $v_0\neq v_{n+1}$ and $n\geq 1$.  The orientation of $\Gamma$ and the fact that $\Gamma$ is self-avoiding ensure that $\gamma$ is simple.  By the construction, $H_1,\ldots,H_n$ are the hexagons on the left of $\Gamma$ with $H_0,H_{n+1}$ not on the left or right of $\Gamma$ and, moreover, the hexagons centered at the vertices of $\partial^+\gamma$ are on the right of $\Gamma$.  This implies that $\gamma$ does not use any vertex in $\partial^+\gamma$ since $\Gamma$ is an interface.  Furthermore, for each $1\leq i\leq n$, $v_i$ has at least one right-boundary vertex since the head of the primary edge $e_{j_i}$ is a right-boundary vertex of $v_i$.  Therefore, $\gamma$ is a right-most path.  From the construction, for each $1\leq i\leq n$, the primary edges $e_{j_{i-1}+1},\ldots,e_{j_i}$ are the right-boundary edges of $\gamma$ at $v_i$, which gives that $\partial^*\gamma=\Gamma$ since $v_0\neq v_{n+1}$.  The uniqueness follows from (\ref{item1}).

(\ref{item3}) We only give the proof of the statement for $\Gamma$, as $\Gamma'$ can be treated analogously.  Let $\Gamma=(e_1^*,\ldots,e_m^*)$.  Similarly to the proof of (\ref{item2}), we construct the hexagons $H_0,\ldots,H_n$.  Define the path $\gamma=(v_0,\ldots,v_n)$, where $v_i$ is the vertex at the center of $H_i$ for $0\leq i\leq n$.  The fact that $\Gamma$ is a cycle and the tail of $e_1^*$ is an internal vertex implies that $v_0=v_n$ and $n\geq 2$.  The fact that $\Gamma$ is counterclockwise and edge-simple ensures that $\gamma$ is simple.  By the construction, $H_0,\ldots,H_n$ are the hexagons on the left of $\Gamma$ and the hexagons centered at the vertices of $\partial^+\gamma$ are on the right of $\Gamma$.  This implies that $\gamma$ does not use any vertex in $\partial^+\gamma$ since $\Gamma$ is an interface.  Moreover, for each $1\leq i\leq n$, $v_i$ has at least one right-boundary vertex since the head of the primary edge $e_{j_i}$ is a right-boundary vertex of $v_i$.  Therefore, $\gamma$ is a right-most circuit.  From the construction, for each $1\leq i\leq n$, the primary edges $e_{j_{i-1}+1},\ldots,e_{j_i}$ are the right-boundary edges of $\gamma$ at $v_i$, which gives that $\partial^*\gamma=\Gamma$.  The uniqueness follows from (\ref{item1}).  Observe that $\partial^i G$ (resp. $\partial^o G$) is the set of vertices which are the centers of the hexagons on the left (resp. right) of $\Gamma$.  Then it follows from the above argument that $\partial^+\gamma=\partial^o G$ and the set of vertices of $\gamma$ is equal to $\partial^i G$.
\end{proof}

The next lemma enables us to estimate $|\partial^+\gamma|$ by the length $|\gamma|$ of a right-most path $\gamma$.
\begin{lemma}\label{l5}
For every right-most path $\gamma$,
\begin{equation*}
\frac{|\gamma|-1}{6}\leq|\partial^+\gamma|\leq 5|\gamma|.
\end{equation*}
\end{lemma}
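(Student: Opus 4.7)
The plan is to prove both bounds by double counting the incidences
$\{(i,u): u\in\partial^{+}\gamma(v_i)\}$, using only the local geometry at each vertex of $\gamma$ and the fact that the triangular lattice is $6$-regular.

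Write $\gamma=(v_0,\ldots,v_n)$, so $|\gamma|=n$. A vertex $v_i$ contributes right-boundary edges only when both $v_{i-1}$ and $v_{i+1}$ are defined; in that case its right-boundary edges are obtained by listing the oriented edges out of $v_i$ in counterclockwise order starting just after $\langle v_i,v_{i-1}\rangle$ and ending just before $\langle v_i,v_{i+1}\rangle$. Since $v_i$ has exactly $6$ neighbors in $\mathbb{T}$ and two of them ($v_{i-1}$ and $v_{i+1}$) are excluded, we get
\begin{equation*}
0 \le |\partial^{+}\gamma(v_i)| \le 4 \qquad \text{for each such } v_i.
\end{equation*}
Moreover, because $\gamma$ is right-most, $|\partial^{+}\gamma(v_i)|\ge 1$ whenever $v_{i-1},v_{i+1}$ are defined. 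The number of indices $i$ for which both neighbours along $\gamma$ are defined is $n-1$ if $\gamma$ is not a circuit and $n$ if it is; call this number $N$, so $N\ge n-1$.

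For the upper bound, I would simply sum the pointwise estimate: since $\partial^{+}\gamma=\bigcup_{i}\partial^{+}\gamma(v_i)$,
\begin{equation*}
|\partial^{+}\gamma| \;\le\; \sum_{i} |\partial^{+}\gamma(v_i)| \;\le\; 4N \;\le\; 4n \;\le\; 5|\gamma|.
\end{equation*}

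For the lower bound I would flip the sum: a fixed vertex $u\in V(\mathbb{T})$ can appear in $\partial^{+}\gamma(v_i)$ only if $v_i$ is one of the $6$ neighbours of $u$, so
\begin{equation*}
\#\{i : u\in\partial^{+}\gamma(v_i)\} \;\le\; 6 \qquad \text{for every } u\in\partial^{+}\gamma.
\end{equation*}
Using this on the right-most property $|\partial^{+}\gamma(v_i)|\ge 1$ for each of the $N\ge n-1$ contributing indices,
\begin{equation*}
|\gamma|-1 \;\le\; N \;\le\; \sum_{i} |\partial^{+}\gamma(v_i)| \;=\; \sum_{u\in\partial^{+}\gamma} \#\{i : u\in\partial^{+}\gamma(v_i)\} \;\le\; 6\,|\partial^{+}\gamma|,
\end{equation*}
which rearranges to the desired lower bound. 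The only potential snag is the bookkeeping for endpoints versus the circuit case, and for very short paths ($|\gamma|\le 1$) the lower bound is trivial since $(|\gamma|-1)/6\le 0$; no step should present a genuine obstacle.
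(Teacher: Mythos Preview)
Your approach is essentially the same as the paper's: both arguments use the local bounds $1\le|\partial^{+}\gamma(v_i)|\le 5$ for each interior index together with the observation that any fixed vertex can be a right-boundary vertex for at most $6$ indices, and then double count. One small slip: a right-most path is only required to be edge-simple, so $v_{i-1}$ and $v_{i+1}$ may coincide, in which case $|\partial^{+}\gamma(v_i)|$ can equal $5$ rather than $4$; this does not affect your conclusion, since $5N\le 5|\gamma|$ still yields the stated upper bound.
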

\begin{proof}
Let $\gamma=(v_0,\ldots,v_n)$.  By the definition of right-most paths, for each $v_i$ with $v_{i-1}$ and $v_{i+1}$ well defined, the number of right-boundary vertices of $v_i$ is at least 1 and at most 5.  Moreover, it is easy to see that a vertex of $\mathbb{T}$ is a right-boundary vertex of at most 6 vertices in $\gamma$.  Then the lemma follows immediately.
\end{proof}

\begin{figure}
\centering
\includegraphics[height=0.12\textwidth]{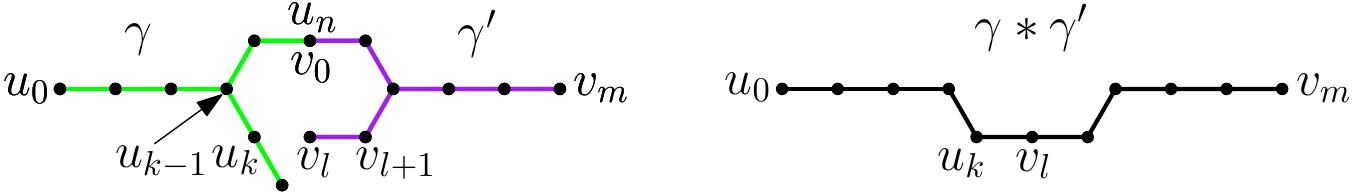}
\caption{The $*$-concatenation $\gamma*\gamma'$ of two right-most paths ${\gamma=(u_0,\ldots,u_n)}$ and ${\gamma'=(v_0=u_n,\ldots,v_m)}$.}\label{concatenation}
\end{figure}

Given two right-most paths $\gamma=(u_0,\ldots,u_n)$ and $\gamma'=(v_0,\ldots,v_m)$ with $v_0=u_n$, we now define a way to concatenate $\gamma$ and $\gamma'$, called the \textbf{$*$-concatenation}, such that the resulting path is a right-most path from $u_0$ to $v_m$.  If $u_0\in\gamma'$, then we set
$l:=\max\{i:v_i=u_0\}$ and let $\gamma*\gamma':=(v_l,\ldots,v_m)$; if $u_0\notin\gamma'$, then we set
\begin{equation*}
k:=\min\{i:u_i\mbox{ is adjacent to some vertex in }\gamma'\}\quad\mbox{and}\quad l:=\max\{i:v_i\mbox{ is ajacent to }u_k\},
\end{equation*}
and define $\gamma*\gamma':=(u_0,\ldots,u_k,v_l,\ldots,v_m)$.  See Figure \ref{concatenation} for
an example.  We now show that the $*$-concatenation does indeed yield a right-most path:
\begin{lemma}\label{l6}
For any $\gamma\in\mathscr{R}(x,y)$ and $\gamma'\in\mathscr{R}(y,z)$, we have $\gamma'':=\gamma*\gamma'\in\mathscr{R}(x,z)$. Moreover,
\begin{equation*}
|(\partial^+\gamma'')\backslash(\partial^+\gamma\cup\partial^+\gamma')|\leq 8.
\end{equation*}
\end{lemma}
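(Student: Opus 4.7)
The plan is to handle the two cases in the definition of the $*$-concatenation separately. If $u_0 \in \gamma'$, then $\gamma'' = (v_l, \ldots, v_m)$ is a suffix of $\gamma'$, so right-most-ness of $\gamma''$ is inherited locally from $\gamma'$; moreover $\partial^+\gamma'' \subseteq \partial^+\gamma'$ makes the cardinality bound trivial. All the work is in the main case $u_0 \notin \gamma'$, where $\gamma'' = (u_0, \ldots, u_k, v_l, \ldots, v_m)$.

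For that main case, I would first verify that $\gamma''$ is a simple path. Adjacency at the gluing $u_k \sim v_l$ is built into the definition of $l$. For edge-simplicity, I would use the minimality of $k$ to argue that each $u_i$ with $i < k$ is neither on nor adjacent to any vertex of $\gamma'$, so in particular $(u_0, \ldots, u_{k-1})$ and $(v_l, \ldots, v_m)$ share no edges and no vertices; the maximality of $l$ then rules out a return of $(v_l, \ldots, v_m)$ back to $u_k$ through a coincidence $u_k = v_j$ with $j > l$. Next I would check the right-most axioms. At every $u_i$ with $1 \leq i \leq k-1$ the predecessor and successor along $\gamma''$ agree with those along $\gamma$, so the right-boundary set at $u_i$ is identical to the one in $\gamma$; similarly at every $v_j$ with $l+1 \leq j \leq m-1$ the right boundary matches that in $\gamma'$. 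The right-most axioms therefore transfer directly from $\gamma$ and $\gamma'$ at these vertices. Only the two gluing vertices $u_k$ and $v_l$ see a change: at $u_k$ the successor switches from $u_{k+1}$ (or is newly defined, if $k = n$) to $v_l$, and at $v_l$ the predecessor switches from $v_{l-1}$ (or is newly defined, if $l = 0$) to $u_k$. At each of these I must produce at least one right-boundary neighbor, which follows from the fact that $v_l \neq u_{k-1}$ and $v_{l+1} \neq u_k$, so that the new predecessor and successor are distinct and the counterclockwise arc between them around the hexagonal star is nonempty. I must also check that the new right-boundary vertices at $u_k$ and $v_l$ are not visited by $\gamma''$ --- any such coincidence would contradict one of the extremal conditions defining $k$ and $l$.

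For the quantitative bound, the previous step shows that the right-boundary sets of $\gamma''$ at every vertex other than $u_k$ and $v_l$ lie in $\partial^+\gamma \cup \partial^+\gamma'$. Hence $\partial^+\gamma'' \setminus (\partial^+\gamma \cup \partial^+\gamma')$ is contained in the union of the right boundaries of $\gamma''$ at the two gluing sites $u_k$ and $v_l$. Each vertex of $\mathbb{T}$ has six neighbors, and at any right-most-path vertex whose predecessor and successor are distinct, the right-boundary set has at most $6 - 2 = 4$ elements. Since $u_{k-1} \neq v_l$ (by minimality of $k$) and $u_k \neq v_{l+1}$ (by maximality of $l$), this bound applies at both gluing vertices and yields $4 + 4 = 8$.

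I expect the main obstacle to be the verification in the middle step that the new right-boundary vertices at $u_k$ and $v_l$ really lie off $\gamma''$, especially in degenerate configurations where one gluing vertex happens to coincide with a vertex of the other path. The minimality and maximality conditions in the definition of the $*$-concatenation are placed precisely to preclude such coincidences, so once those cases are disposed of, the remainder reduces to a routine enumeration of the six neighbors of each gluing vertex in the hexagonal star.
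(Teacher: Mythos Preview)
Your overall plan matches the paper's: dispose of the trivial case $u_0\in\gamma'$, and in the main case write $\gamma''=(u_0,\ldots,u_k,v_l,\ldots,v_m)$, verify simplicity and the right-most axioms vertex by vertex, and bound the extra right boundary by the contributions at the two gluing sites. Your $4+4=8$ count is exactly the paper's.

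There is one minor slip and one genuine gap.

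\emph{Minor.} Distinctness of predecessor and successor does not by itself force the counterclockwise arc around $u_k$ (or $v_l$) to be nonempty: in $\mathbb{T}$, two neighbours of a vertex can be consecutive in the cyclic order, and then the arc is empty. What you actually need is that $u_{k-1}$ and $v_l$ are not \emph{adjacent}; if the arc were empty they would be, and since $v_l\in\gamma'$ this would contradict the minimality of $k$. The paper phrases it exactly this way.

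\emph{Substantial.} You assert that ``any such coincidence would contradict one of the extremal conditions defining $k$ and $l$''. This handles only half the cases. A right-boundary vertex of $u_k$ (along $\gamma''$) lying in $(v_l,\ldots,v_m)$ does contradict the maximality of $l$, and a right-boundary vertex of $v_l$ lying in $(u_0,\ldots,u_k)$ does contradict the minimality of $k$. But the extremal conditions do \emph{not} directly exclude a right-boundary vertex of $u_k$ lying back in $(u_0,\ldots,u_{k-1})$: if some $u_{k'}$ with $k'<k-1$ sits in the counterclockwise arc from $u_{k-1}$ to $v_l$ around $u_k$, all the minimality of $k$ tells you is that $u_{k'}$ is not adjacent to $\gamma'$, which is irrelevant here. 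The paper disposes of this by a planarity argument: one checks that $u_{k'},u_{k-1},v_l$ are pairwise non-adjacent neighbours of $u_k$, uses the right-most property of $\gamma$ to force $u_{k+1}=u_{k'}$, and then observes that the circuit $(u_{k'},u_{k'+1},\ldots,u_k,u_{k'})$ topologically separates $v_l$ from $u_n=v_0$. Hence $\gamma'$ (which runs from $v_0$ to $v_l$) must meet this circuit, producing some $u_{k''}$ with $k''<k$ adjacent to a vertex of $\gamma'$, and only now does the minimality of $k$ yield a contradiction. The symmetric case, a right-boundary vertex of $v_l$ lying in $(v_{l+1},\ldots,v_m)$, requires the analogous argument. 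Your outline does not anticipate this extra topological step, and without it the verification that $\gamma''$ is right-most is incomplete.
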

\begin{proof}
Let $\gamma=(u_0=x,\ldots,u_n=y),\gamma'=(v_0=y,\ldots,v_m=z)$, and let $k,l$ be the associated quantities defined above Lemma \ref{l6}.  The case $u_0\in\gamma'$ is trivial, since in this case $\gamma''=(v_l=x,\ldots,v_m=z)$.  In the following, we assume that $u_0\notin\gamma'$.  Write
\begin{align*}
\gamma_L:=(u_0,\ldots,u_k),\quad\gamma_R:=(v_l,\ldots,v_m)\quad\mbox{and}\quad\gamma_M:=(u_{k-1},u_k,v_l,v_{l+1}),
\end{align*}
where we delete $u_{k-1}$ (resp. $v_{l+1}$) in $\gamma_M$ when $k=0$ (resp. $l=m$).

It is clear that $\gamma''$ is simple and $\gamma_L,\gamma_R$ are right-most paths.  Now we prove that $\gamma_M$ is also right-most.  Note that $\partial^+\gamma_M(u_k)\neq\emptyset$ when $k\neq 0$, since otherwise $u_{k-1}$ would be adjacent to $v_l$, contradicting the definition of $u_k$.  Moreover, $v_{l+1}$ is not in $\partial^+\gamma_M(u_k)$, as otherwise $u_k$ would be adjacent to $v_{l+1}$, contradicting the definition of $v_l$.  Similarly, $\partial^+\gamma_M(v_l)\neq\emptyset$ when $l\neq m$, and $u_{k-1}$ is not in $\partial^+\gamma_M(v_l)$.  Therefore, $\gamma_M$ is indeed a right-most path.
A similar argument shows that $\partial^+\gamma_L$ has no vertices in $\gamma_M\cup\gamma_R$, and $\partial^+\gamma_R$ has no vertices in $\gamma_L\cup\gamma_M$.

\begin{figure}
\centering
\includegraphics[height=0.2\textwidth]{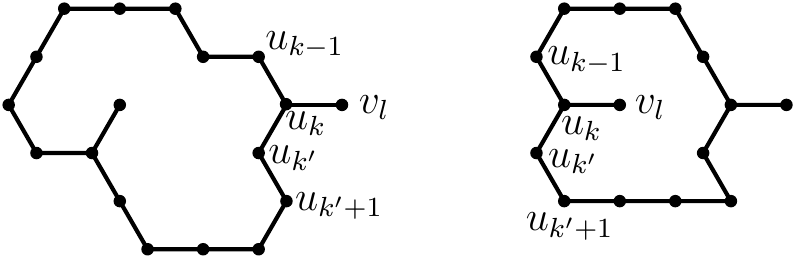}
\caption{\emph{Left}:  The circuit $(u_{k'},u_{k'+1},\ldots,u_k,u_{k'})$ is clockwise and $v_l$ lies in the exterior of this circuit.  \emph{Right}: The circuit $(u_{k'},u_{k'+1},\ldots,u_k,u_{k'})$ is counterclockwise and $v_l$ lies in the interior of this circuit.}\label{contradiction}
\end{figure}

Next, let us prove that $\partial^+\gamma_M$ has no vertices in $\gamma_L\cup\gamma_R$.  We first prove that $\partial^+\gamma_M$ has no vertices in $\gamma_L$.  Obviously, $\partial^+\gamma_M(v_l)$ has no vertices in $\gamma_L$, as otherwise there would exist some $0\leq k'<k$ such that $u_{k'}$ is adjacent to $v_l$, contradicting the definition of $u_k$.  Then, we need to show that $\partial^+\gamma_M(u_k)$ also has no vertices in $\gamma_L$; this is more involved than the $\partial^+\gamma_M(v_l)$ case.  Suppose for a contradiction that there exists some $0\leq k'<k-1$ such that $u_{k'}\in\partial^+\gamma_M(u_k)$.  It is easy to check that $u_{k'},u_{k-1}$ and $v_l$ are three distinct vertices adjacent to $u_k$ but not adjacent to each other, with $u_{k'}$ being the right-boundary vertex of $u_k$ along $(u_{k-1},u_k,v_l)$, and moreover, when $(u_{k'},u_{k'+1},\ldots,u_k,u_{k'})$ is a clockwise (resp. counterclockwise) circuit, then $v_l$ lies
in the exterior (resp. interior) of it; see Figure \ref{contradiction} for an illustration.  The definition of $u_k$ together with the facts that $u_0\notin\gamma'$ and $u_n=v_0$ implies that $k<n$.  Since $\gamma$ is right-most, it is easy to see that $u_{k+1}=u_{k'}$ and, moreover, when the circuit $(u_{k'},u_{k'+1}\ldots,u_k,u_{k+1}=u_{k'})$ is clockwise (resp. counterclockwise), $u_n$ lies on or in the interior (resp. exterior) of this circuit.  Thus, the path $(v_0=u_n,\ldots,v_l)$ must intersect the circuit $(u_{k'},u_{k'+1}\ldots,u_k,u_{k'})$, which implies that there is some $0\leq k''<k$ such that $u_{k''}$ is adjacent to some vertex in $\gamma'$, contradicting the definition of $u_k$.  Hence, $\partial^+\gamma_M(u_k)$ has no vertices in $\gamma_L$.  Then it follows that $\partial^+\gamma_M$ has no vertices in $\gamma_L$.  The proof of the fact that $\partial^+\gamma_M$ has no vertices in $\gamma_R$ is analogous to the $\gamma_L$ case.  Therefore, $\partial^+\gamma_M$ has no vertices in $\gamma_L\cup\gamma_R$.

The argument above implies that $\gamma''$ is a right-most path.  It is clear that ${\partial^+\gamma_L\subset\partial^+\gamma}$, ${\partial^+\gamma_R\subset\partial^+\gamma'}$ and ${\partial^+\gamma''=\partial^+\gamma_L\cup\partial^+\gamma_M\cup\partial^+\gamma_R}$.  Moreover, we have $\partial^+\gamma_M\leq 8$ since $u_{k-1}\neq v_l$ and $u_k\neq v_{l+1}$. Therefore, ${|(\partial^+\gamma'')\backslash(\partial^+\gamma\cup\partial^+\gamma')|}\leq {|\partial^+\gamma_M|}\leq 8$.
\end{proof}

\subsection{Proof of Proposition \ref{p1}}\label{normdefine}
Before giving the proof, we need a few basic facts about percolation.  The following lemma
says that the distance between any fixed vertex on $\mathbb{T}$ and the infinite cluster has exponential tails.
\begin{lemma}[see, e.g., Lemma 2.7 in \cite{BLPR15}]
For each $p>p_c$ there are $C_1,C_2>0$ such that for all $v\in V(\mathbb{T})$ and $r>0$,
\begin{equation}\label{e14}
\mathbf{P}_p(\|v-\widetilde{v}\|_{\infty}>r)\leq C_1\exp(-C_2r).
\end{equation}
\end{lemma}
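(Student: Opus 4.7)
The plan is to reduce, by translation invariance of $\mathbf{P}_p$, to the case $v=0$, for which the event $\{\|v-\widetilde{v}\|_{\infty}>r\}$ is exactly $\{B_r\cap\mathcal{C}_p^{\infty}=\emptyset\}$, and then to bound this event by a Peierls-type argument on a renormalized lattice of ``good'' blocks. The underlying mechanism is that for $p>p_c=1/2$ the unique infinite cluster is dense at every macroscopic scale, so the probability that the whole of $B_r$ misses $\mathcal{C}_p^{\infty}$ should be exponentially small in $r$.

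First I would carry out the block renormalization in the style of Grimmett--Marstrand, or its 2D refinement by Kesten--Zhang. Fix $K$ large depending on $p$ and partition $\mathbb{R}^2$ into $K\times K$ blocks. Call a block \emph{good} if, within the $3\times 3$ superblock around it, there is a unique largest open cluster that crosses the centre block in both coordinate directions and, after a sprinkling step, connects across the whole superblock. For $p>p_c$ one can take $K=K(p)$ so that the indicators of good blocks stochastically dominate a Bernoulli site percolation on the coarse lattice whose parameter can be made arbitrarily close to $1$. By uniqueness of the infinite cluster (Burton--Keane), all good blocks in a connected coarse cluster of good blocks are contained in the same $\mathcal{C}_p^{\infty}$, so in particular every good block touching $B_r$ deposits a vertex of $\mathcal{C}_p^{\infty}$ inside $B_r$.

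Next I would run the Peierls estimate on the coarse lattice. The event $\{B_r\cap\mathcal{C}_p^{\infty}=\emptyset\}$ forces every coarse cell meeting $B_r$ to be part of a ``bad'' configuration separating $B_r$ from infinity, which requires a bad-block structure of coarse length at least of order $r/K$. At high enough good-block density, the probability of such a bad structure is bounded by $C_1\exp(-C_2 r)$ via the standard Peierls counting of paths or contours. Combined with translation invariance, this yields (\ref{e14}) uniformly in $v\in V(\mathbb{T})$.

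The main obstacle will be pinning down the definition of ``good block'' so that two properties hold simultaneously: the stochastic domination by a supercritical Bernoulli site percolation on the coarse lattice really holds with the slack needed for the Peierls counting, and a single good block near $B_r$ is guaranteed to produce a vertex of $\mathcal{C}_p^{\infty}$ inside $B_r$. Adapting the sprinkling construction of Grimmett--Marstrand from bond percolation on $\mathbb{Z}^d$ to site percolation on $\mathbb{T}$ is essentially routine, since it relies only on finite energy and the Burton--Keane uniqueness of the infinite cluster, both available in our setting; nevertheless this is where the bulk of the work lies. Once these points are in place, the exponential tail estimate follows immediately.
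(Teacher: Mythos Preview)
The paper does not give its own proof of this lemma; it simply records it as a standard fact and cites Lemma~2.7 of \cite{BLPR15}. Your block-renormalization plus Peierls approach is correct in outline and would yield the estimate, but it imports far heavier machinery than the planar setting requires. On $\mathbb{T}$ with $p>p_c=1/2$ the closed sites form a \emph{subcritical} site percolation, and the event $\{B_r\cap\mathcal{C}_p^{\infty}=\emptyset\}$ forces a closed circuit surrounding $B_r$: take the union of $B_r$ with every (necessarily finite) open cluster touching it; its outer vertex boundary is entirely closed, and on the self-matching triangular lattice this boundary contains a circuit. That circuit must cross the horizontal ray to the right of $B_r$ at some closed vertex $w$ whose closed cluster then reaches the left of $B_r$ and hence has diameter at least $\|w\|_\infty+r$. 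Summing over the possible positions of $w$ along the ray and invoking the subcritical exponential decay of the closed-cluster radius (the estimate the paper itself records as \eqref{e13}) already gives $C_1\exp(-C_2r)$. This is almost certainly the argument behind the cited reference and avoids Grimmett--Marstrand, sprinkling, and Burton--Keane entirely.

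Two small corrections to your write-up, should you keep the renormalization route. First, sprinkling belongs to the Grimmett--Marstrand \emph{dynamic} renormalization used to prove $p_c(\text{slab})=p_c$; for the static block construction at a fixed supercritical $p$ it is unnecessary and should be dropped from your definition of a good block. Second, the sentence ``every good block touching $B_r$ deposits a vertex of $\mathcal{C}_p^{\infty}$ inside $B_r$'' is not correct as stated: a good block's large cluster is only guaranteed to lie in $\mathcal{C}_p^{\infty}$ once you know the block belongs to the \emph{infinite} coarse cluster of good blocks (uniqueness then identifies the resulting infinite open cluster with $\mathcal{C}_p^{\infty}$). Consequently the Peierls step must bound the probability that the entire coarse box of side $\sim r/K$ is cut off from infinity by a bad-block contour of coarse length $\gtrsim r/K$, not merely that some block is bad. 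With these fixes your argument goes through, but the duality route is an order of magnitude shorter.
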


For two vertices $u,v$ of $\mathbb{T}$ and a percolation configuration $\omega$ on $\mathbb{T}$, the \textbf{chemical distance} $D_{\omega}(u,v)$ is defined by
\begin{equation*}
D_{\omega}(u,v):=\inf\{\mbox{$|\gamma|$: $\gamma$ is an open path from $u$ to $v$}\}.
\end{equation*}
Denote by $u\leftrightarrow v$ the event that $u$ and $v$ are in the same open cluster.  Set $D_{\omega}(u,v)=\infty$ if $u\nleftrightarrow v$.  The estimate (\ref{e22}) below was proved by Antal and Pisztora \cite{AP96} (for supercritical bond percolation on $\mathbb{Z}^d$, but the proof adapts to our setting); as an extension of (\ref{e22}), the estimate (\ref{e23}) follows easily from (\ref{e22}) (see the proof of (2.16) in \cite{BLPR15}).
\begin{lemma}[Theorem 1.1 in \cite{AP96} and (2.16) in \cite{BLPR15}]\label{l7}
Suppose $p>p_c$.  There exist constants $C_1,C_2,C_3>0$ (depending on $p$) such that for all $v\in V(\mathbb{T})$,
\begin{equation}\label{e22}
\mathbf{P}_p(0\leftrightarrow v,D_{\omega}(0,v)>C_1\|v\|_{\infty})\leq C_2\exp(-C_3\|v\|_{\infty}).
\end{equation}
Moreover, there exist constants $C,C_4,C_5>0$ (depending on $p$), such that for all $v\in V(\mathbb{T})$ and $r\geq\|v\|_{\infty}$,
\begin{equation}\label{e23}
\mathbf{P}_p(0,v\in\mathcal{C}_p^{\infty},D_{\omega}(0,v)>Cr)\leq C_4\exp(-C_5r).
\end{equation}
\end{lemma}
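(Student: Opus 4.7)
My plan is to handle the two estimates in sequence. For (\ref{e22}) I would run the Antal--Pisztora coarse-graining argument adapted to the site setting on $\mathbb{T}$; for (\ref{e23}) I would use this machinery again, localized to a box of side $O(r)$, to obtain the uniform-in-$r$ tail.

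\emph{Step 1 (proof of (\ref{e22})).} Fix a scale $K$ to be chosen later and partition $\mathbb{R}^2$ into $K \times K$ rhombi $\{B_K(z)\}$ indexed by a renormalized copy of $\mathbb{T}$. Call a block $K$-\emph{good} if (i) the enlarged block $B_{3K}(z)$ contains a unique open cluster of diameter at least $K$, (ii) any two vertices in this cluster are joined inside $B_{3K}(z)$ by an open path of length at most $C_0 K$, and (iii) the large clusters of any two adjacent good blocks coincide. The required block estimates --- uniqueness of the crossing cluster and exponential decay in $K$ of the failure probability --- follow from RSW theory, the FKG inequality, and Burton--Keane uniqueness, all valid for site percolation on $\mathbb{T}$ in the supercritical regime. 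For any $\varepsilon > 0$, choosing $K$ large makes the good-block indicator process stochastically dominate Bernoulli site percolation on $\mathbb{T}$ with parameter $1 - \varepsilon$; taking $\varepsilon$ small places the renormalized process in the highly supercritical regime.

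On $\{0 \leftrightarrow v\}$ with $D_\omega(0, v) > C_1 \|v\|_\infty$, an open geodesic from $0$ to $v$ cannot lie entirely in good blocks, since then its length would be at most $O(\|v\|_\infty)$ (the number of good blocks traversed times $C_0 K$). Hence the geodesic must either originate or terminate near a bad block, or must cross a chain of bad blocks of combinatorial length $\Omega(\|v\|_\infty / K)$. A Peierls argument in the renormalized highly supercritical lattice bounds the probability of such a chain by $\exp(-c \|v\|_\infty)$, and the near-endpoint bad events are controlled at the same order by (\ref{e14}). Summing these contributions gives (\ref{e22}).

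\emph{Step 2 (deducing (\ref{e23})).} Fix $r \geq \|v\|_\infty$ and set $\Lambda := B_{2r}$. A box-localized version of Step 1 gives, with probability $\geq 1 - C \exp(-c r)$, that (a) there is a unique open cluster $\mathcal{K}$ in $\Lambda$ of diameter $\geq r$, and (b) any two vertices of $\mathcal{K}$ are joined inside $\Lambda$ by an open path of length $\leq C r$. On $\{0, v \in \mathcal{C}_p^\infty\}$, the cluster of $0$ in $\omega \cap \Lambda$ necessarily reaches $\partial \Lambda$ (the infinite path from $0$, restricted to $\Lambda$, does so) and hence has diameter $\geq r$, and similarly for $v$; so on the good event above, both $0$ and $v$ lie in $\mathcal{K}$, giving $D_\omega(0, v) \leq C r$. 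This yields (\ref{e23}).

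The main obstacle is Step 1: reassembling the Antal--Pisztora machinery in the site-on-$\mathbb{T}$ formulation. Individually, the ingredients (RSW, FKG, Burton--Keane, Peierls) are classical, but verifying in tandem that good blocks dominate a highly supercritical Bernoulli process \emph{and} that internal rerouting inside a good block is Euclidean-linear requires careful bookkeeping adapted to the triangular geometry. Step 2, once the box-localized conclusion of Step 1 is in hand, is a short syntactic deduction from the uniqueness of the giant cluster in $\Lambda$.
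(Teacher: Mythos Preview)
The paper gives no proof here: it cites Antal--Pisztora \cite{AP96} for (\ref{e22}) and asserts that (\ref{e23}) ``follows easily from (\ref{e22})'' by the argument for (2.16) in \cite{BLPR15}. Your Step~1 is therefore the right framework and matches the cited source in spirit. One sentence, however, does not hold as written: the claim that a geodesic lying entirely in good blocks would automatically have length $O(\|v\|_\infty)$ is unjustified --- nothing prevents a geodesic from wandering through arbitrarily many good blocks. The Antal--Pisztora argument is not about the geodesic; it \emph{constructs} a short open path by following a renormalized path of good blocks (shown to exist with the desired probability) and routing through the crossing clusters of consecutive blocks, thereby bounding $D_\omega(0,v)$ from above directly. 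The Peierls step bounds the probability that no such renormalized good path exists, not the probability that the geodesic meets bad blocks. This is a repairable misstatement rather than a fatal gap, and you already flag Step~1 as the delicate part.

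For Step~2 your route genuinely differs from the paper's. You re-run the renormalization inside $B_{2r}$ to obtain local uniqueness of the giant component together with a linear chemical-distance bound there; this is correct. The paper's reduction (via \cite{BLPR15}) is shorter and uses (\ref{e22}) as a black box: pick a deterministic point $w$ with $\|w\|_\infty$ and $\|w-v\|_\infty$ both of order $r$, use (\ref{e14}) to find $\widetilde{w}\in\mathcal{C}_p^\infty$ within distance $r/2$ of $w$, and apply (the translate of) (\ref{e22}) to the pairs $(0,\widetilde{w})$ and $(\widetilde{w},v)$; a union bound over the $O(r^2)$ possible locations of $\widetilde{w}$ and the triangle inequality then give $D_\omega(0,v)\le Cr$ with failure probability $\exp(-cr)$. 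Your approach is self-contained (it does not invoke (\ref{e22})) at the cost of redoing the block estimates; the paper's approach is a two-line deduction once (\ref{e22}) is in hand.
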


\begin{proof}[Sketch of proof of Proposition \ref{p1}]
The proof is basically the same as that for Theorem 2.1 and Proposition 2.2 in \cite{BLPR15}, using the subadditive ergodic theorem, the geometric properties of right-most paths in Section \ref{geo} and some standard percolation inputs.  Therefore, we omit the details of the proof and just explain how to
address an additional issue here.

In fact, Theorem 2.1 of \cite{BLPR15} was stated in a slightly different form, and the analog of this form in our setting is the following: There exists a norm $\beta_p'$ on $\mathbb{R}^2$ such that for any $x\in\mathbb{R}^2$,
\begin{equation}\label{e20}
\beta_p'(x):=\lim_{n\rightarrow\infty}\frac{b([0],[nx])}{n}\quad\mbox{$\mathbb{P}_p$-a.s. and in $L^1(\mathbb{P}_p)$},
\end{equation}
where for each $x\in\mathbb{R}^2$, we let $[x]$ denote the vertex $v$ of $\mathcal{C}_p^{\infty}$ that is nearest to $x$ in the $\ell^{\infty}$-norm, taking the one with a minimal $\eta(v)$ in case there is a tie, and ${\{\eta(v):v\in V(\mathbb{T})\}}$ is a collection of i.i.d. random variables uniform on $[0,1]$ and independent of ${\{\omega(v):v\in V(\mathbb{T})\}}$ under $\mathbb{P}_p$, and the probability space is assumed to be large enough to support both $\omega$ and $\eta$, with $\mathbb{P}_p$ being the corresponding probability measure.  The advantage of the definition (\ref{e20}) is that the fact that $\beta_p'$ inherits all symmetries of the lattice $\mathbb{T}$ follows immediately from this definition (see Proposition 2.2 of \cite{BLPR15}).  However, extra randomness is introduced in the definition of $[x]$.  So, for simplicity we state Proposition \ref{p1} by using $\widetilde{x}$ instead of $[x]$, and explain below how to transfer the result for $[x]$ to that for $\widetilde{x}$.  Note that one can also work only with $\widetilde{x}$ without using $[x]$, by following the proofs in \cite{BLPR15} with some modifications.  Let us mention that Theorem 2.1 of \cite{BLPR15} was presented as Proposition 2.1 in \cite{GMPT17} in a form similar to Proposition \ref{p1} here.  Nevertheless, in the following we shall give an estimate (\ref{e21}) on $|b(\widetilde{0},\widetilde{nx})-b([0],[nx])|$, which implies that $\beta=\beta'$.  In fact, one can first use the arguments in \cite{BLPR15} to prove the statement for $\beta'$, and then obtain Proposition \ref{p1} by applying (\ref{e21}).

Let $x\in\mathbb{R}^2$.  Let $\gamma_1$ be an open path from $\widetilde{0}$ to $[0]$ such that $|\gamma_1|=D_{\omega}(\widetilde{0},[0])$, let $\gamma_2$ be an open right-most path from $[0]$ to $[nx]$ such that $\partial^+\gamma_2=b([0],[nx])$, and let $\gamma_3$ be an open path from $[nx]$ to $\widetilde{nx}$ such that $|\gamma_3|=D_{\omega}([nx],\widetilde{nx})$.  It is obvious that $\gamma_1$ and $\gamma_2$ are right-most paths.  Then we have
\begin{align*}
b(\widetilde{0},\widetilde{nx})&\leq\mathbf{b}(\gamma_1*\gamma_2*\gamma_3)\quad\mbox{by Lemma \ref{l6} and the definition of $b(\cdot,\cdot)$}\\
&\leq\mathbf{b}(\gamma_1)+\mathbf{b}(\gamma_2)+\mathbf{b}(\gamma_3)+16\quad\mbox{by Lemma \ref{l6}}\\
&\leq 5D_{\omega}(\widetilde{0},[0])+b([0],[nx])+5D_{\omega}([nx],\widetilde{nx})+16\quad\mbox{by Lemma \ref{l5}}.
\end{align*}
Similarly, we have $b([0],[nx])\leq 5D_{\omega}([0],\widetilde{0})+b(\widetilde{0},\widetilde{nx})+5D_{\omega}(\widetilde{nx},[nx])+16$.  Thus,
\begin{equation}\label{e18}
|b(\widetilde{0},\widetilde{nx})-b([0],[nx])|\leq 5D_{\omega}(\widetilde{0},[0])+5D_{\omega}([nx],\widetilde{nx})+16.
\end{equation}
Therefore, there exist constants $C_1,C_2,C_3>0$ (depending on $p$) such that for all $n\geq 1$ and all $x\in\mathbb{R}^2$,
\begin{align}
&\mathbf{P}_p(|b(\widetilde{0},\widetilde{nx})-b([0],[nx])|\geq\sqrt{n})\nonumber\\
&\leq\mathbf{P}_p(5D_{\omega}(\widetilde{0},[0])+5D_{\omega}([nx],\widetilde{nx})+16\geq\sqrt{n})\quad\mbox{by (\ref{e18})}\nonumber\\
&\leq\mathbf{P}_p(\|\widetilde{0}\|_{\infty}>C_1\sqrt{n})+\mathbf{P}_p(\|nx-\widetilde{nx}\|_{\infty}>C_1\sqrt{n})\nonumber\\
&\quad+\sum_{u,v\in\mathcal{C}_{\infty}\cap B_{C_1\sqrt{n}}}\mathbf{P}_p\left(5D_{\omega}(u,v)+8\geq\frac{\sqrt{n}}{2}\right)+\sum_{u,v\in\mathcal{C}_{\infty}\cap B_{C_1\sqrt{n}}(nx)}\mathbf{P}_p\left(5D_{\omega}(u,v)+8\geq\frac{\sqrt{n}}{2}\right)\nonumber\\
&\leq C_2\exp(-C_3\sqrt{n})\quad\mbox{by (\ref{e14}) and (\ref{e23})}.\label{e21}
\end{align}
\end{proof}

\section{Proofs of Theorems \ref{t4} and \ref{t3}}\label{adapt}
\begin{proof}[Sketch of proofs of Theorems \ref{t4} and \ref{t3}]
The proofs are essentially the same as the proofs of Theorems 1.2, 1.4, 1.6, 1.7 and 1.8 in \cite{BLPR15}, relying on Proposition \ref{p1},
the properties of right-most paths in Section \ref{geo} and some standard percolation inputs.  Hence, we shall only outline the strategy of the proofs in the following and refer the reader to \cite{BLPR15} for the details.

\textbf{Step 1.} (Concentration estimates. See Section 3 of \cite{BLPR15}.) We need to prove two key concentration estimates: One is for $b(\widetilde{0},\widetilde{x})$.  The idea is to apply Kesten's Azuma-type concentration inequality for martingales with bounded increments.  However, the increments of the martingale for $b(\widetilde{0},\widetilde{x})-\mathbf{E}_p b(\widetilde{0},\widetilde{x})$ are not bounded.  Instead, we apply Kesten's inequality to a modified right-boundary distance $\widehat{b}(0,x)$ which allows to use not fully open paths at a huge penalty.  Moreover, with high probability the quantities $\widehat{b}(0,x)$ and $b(\widetilde{0},\widetilde{x})$ are close to each other when $\|x\|_2$ is large.  Then we obtain the concentration estimate for $b(\widetilde{0},\widetilde{x})$ from that for $\widehat{b}(0,x)$.  The second key estimate is for a geometric concentration on the straight line segment joining $0$ to $x$ of the right-most paths nearly minimizing $b(\widetilde{0},\widetilde{x})$, and its proof is based on the first concentration estimate and Lemma \ref{l6}, using the $*$-concatenation to construct a nearly optimal right-most open path.

\textbf{Step 2.} (Approximating circuits by simple closed curves and vice versa.  See Section 4 of \cite{BLPR15}.)  This step involves two approximations: The ``circuits-to-curves'' approximation says that when $R\rightarrow\infty$, with high probability (quantitative version) for any ``long'' and open right-most circuit $\gamma$ in $B_R$ with the region surrounded by $\gamma$ not too small, there is a rectifiable simple closed curve $\lambda$ such that $\lambda$ approximates $\gamma$ ``well'' and ${\mathbf{b}(\gamma)\geq (1-\epsilon)\len_{\beta_p}(\lambda)}$.  The ``curves-to-circuits'' approximation says that, for any rectifiable simple closed curve $\lambda$ with $\interior(\lambda)$ being convex, as the scaling factor $R\rightarrow\infty$, with high probability there is an open right-most circuit $\gamma$ such that $\gamma$ approximates $R\lambda$ ``well'' and ${\mathbf{b}(\gamma)\leq(1+\epsilon)\len_{\beta_p}(R\lambda)}$.  The proofs of these results use polygonal approximations, the concentration estimates obtained in Step 1 and the properties of right-most paths in Section \ref{geo}.

\textbf{Step 3.} (Final proof. See Section 5 of \cite{BLPR15}.) Finally, one can prove Theorems \ref{t4} and \ref{t3} by using the approximation estimates given in Step 2 and some basic percolation results, such as the fact that, when $R\rightarrow\infty$, with high probability for any ``long'' right-most circuit $\gamma$ in $B_R$ with the region surrounded by $\gamma$ not too small, the ratio of the number of vertices of $\mathcal{C}_p^{\infty}$ surrounded by $\gamma$ to the number of vertices of $\mathbb{T}$ surrounded by $\gamma$ is approximately equal to $\theta_p$.
\end{proof}

\section{Proof of Proposition \ref{p2}}\label{normequal}
First, let us prove the following result:
\begin{lemma}\label{l1}
Suppose $p>p_c$.  For any $x\in\mathbb{R}^2$, we have $\beta_p(x)\geq\mu_p(x)$.
\end{lemma}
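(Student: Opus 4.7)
The approach is to prove the pointwise inequality $T(\widetilde{0}, \widetilde{nx}) \leq b(\widetilde{0}, \widetilde{nx}) + O(1)$ almost surely, and then divide by $n$ and pass to the limit. The key geometric input is that, for an open right-most path $\gamma$ realizing $b(\widetilde{0}, \widetilde{nx})$, the vertices of the right boundary $\partial^+\gamma$ can be traversed by a walk on $\mathbb{T}$ connecting (essentially) the endpoints of $\gamma$; the number of open vertices on that walk is then at most $\mathbf{b}(\gamma)$, so the walk realizes a passage time bound.

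Concretely, I would fix a realizer $\gamma = (v_0, \ldots, v_m)$ of $b(\widetilde{0}, \widetilde{nx})$ and list the vertices of $\partial^+\gamma$ in the natural order induced by $\gamma$: first the right-boundary vertices at $v_1$ in counterclockwise order, then those at $v_2$, and so on through $v_{m-1}$. Using the local geometry of the dual interface $\partial^*\gamma$ (two consecutive edges of $\partial^*\gamma$ meet at a vertex of $\mathbb{H}$ where three hexagons meet, whose centers form a unit equilateral triangle in $\mathbb{T}$), two consecutive entries in this list are either identical or adjacent on $\mathbb{T}$. Hence the list defines a walk $\gamma'$ in $\mathbb{T}$ whose vertices lie in $\partial^+\gamma$, starting at a neighbor of $v_0 = \widetilde{0}$ and ending at a neighbor of $v_m = \widetilde{nx}$. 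Loop-erasing $\gamma'$ and prepending one edge from $\widetilde{0}$, appending one edge to $\widetilde{nx}$, yields a path $\tilde\gamma$ from $\widetilde{0}$ to $\widetilde{nx}$ whose open-vertex count satisfies
\begin{equation*}
T(\tilde\gamma) \leq \bigl|\{v \in \partial^+\gamma : \omega(v) = 1\}\bigr| + 2 = \mathbf{b}(\gamma) + 2 = b(\widetilde{0}, \widetilde{nx}) + 2,
\end{equation*}
so $T(\widetilde{0}, \widetilde{nx}) \leq b(\widetilde{0}, \widetilde{nx}) + 2$.

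To conclude I would divide by $n$ and send $n \to \infty$. The right-hand side tends a.s.\ to $\beta_p(x)$ by Proposition \ref{p1}. For the left-hand side, combining the exponential tail estimate (\ref{e14}) with the chemical-distance bound (\ref{e23}) via Borel--Cantelli shows that switching endpoints from $(\widetilde{0}, \widetilde{nx})$ to $(0, nx)$ costs only $o(n)$ almost surely, so $T(\widetilde{0}, \widetilde{nx})/n \to \mu_p(x)$ a.s.\ by the Cox--Durrett shape theorem, yielding $\mu_p(x) \leq \beta_p(x)$. The main technical obstacle is the geometric claim that the ordered right-boundary vertices form a walk on $\mathbb{T}$: this requires a small case analysis at each turn of the dual interface $\partial^*\gamma$, using the right-most condition (every interior vertex of $\gamma$ has a nonempty right-boundary) to rule out degenerate configurations. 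Once it is in place, loop erasure, endpoint corrections, and the limit passage are routine given the tools developed in Section \ref{geo} and the standard percolation inputs already quoted.
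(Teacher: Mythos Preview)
Your argument is correct and gives the same pointwise bound $T(\widetilde{0},\widetilde{nx})\leq b(\widetilde{0},\widetilde{nx})+2$ as the paper, but by a genuinely different route. The paper does not construct a path through $\partial^+\gamma$; instead it invokes a max-flow/min-cut duality from \cite{Yao22} (Proposition~2.5(ii) there): writing $N(u,v)$ for the maximal number of disjoint open Jordan circuits separating $u$ from $v$, one has $N(u,v)\geq T(u,v)-2$ almost surely, and since the interface $\partial^*\gamma$ must cross every such circuit, each crossing contributes an open vertex to $\partial^+\gamma$, whence $b(\widetilde{0},\widetilde{n})\geq N(\widetilde{0},\widetilde{n})\geq T(\widetilde{0},\widetilde{n})-2$. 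Your approach is more self-contained: it relies only on the local geometry of $\mathbb{H}$ (three hexagons meet at every vertex, with mutually adjacent centres) to see that the heads of the primal edges along $\partial^*\gamma$ form a walk in $\mathbb{T}$, avoiding the appeal to the circuit-counting identity. The paper's route, on the other hand, makes the connection to the maximal-flow picture (used again in Lemma~\ref{l3}) more transparent. One small remark on your limit step: the bound $T(0,\widetilde{0})\leq C\|\widetilde{0}\|_\infty$ holds deterministically (take any lattice path), so the endpoint correction needs only (\ref{e14}) and Borel--Cantelli; the chemical-distance estimate (\ref{e23}) is not required here.
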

\begin{proof}
For simplicity, we prove the lemma in the case $x=1$; the proof extends immediately to the general case.  Suppose $\widetilde{0}\neq\widetilde{n}$.  Let $\gamma$ be an open right-most path from $\widetilde{0}$ to $\widetilde{n}$ with $\mathbf{b}(\gamma)=b(\widetilde{0},\widetilde{n})$.  Recall that $\partial^*\gamma$ denotes the interface associated with $\gamma$.  We call a circuit $(v_0,\ldots,v_m)$ a Jordan circuit if $(v_0,\ldots,v_{m-1})$ is a self-avoiding path and $m\geq 2$.  Note that the edges traversed by a Jordan circuit form a Jordan curve. If $\mathcal{C}_1,\ldots,\mathcal{C}_k$ is a sequence of disjoint open Jordan circuits separating $\widetilde{0}$ from $\widetilde{n}$, then $\partial^*\gamma$ must intersect each of the Jordan curves of these circuits, and for each oriented edge in $\partial^*\gamma$ that intersects one such Jordan curve, the open hexagon on the right of the edge belongs to the corresponding Jordan circuit and is in $\partial^+\gamma$.  Thus we have $\mathbf{b}(\gamma)\geq k$.  For $u,v\in V(\mathbb{T})$, let $N(u,v)$ denote the maximal number of disjoint open Jordan circuits separating $u$ from $v$.  Then the above observation implies that
\begin{equation}\label{e24}
b(\widetilde{0},\widetilde{n})\geq N(\widetilde{0},\widetilde{n}).
\end{equation}
By Proposition 2.5 (ii) of \cite{Yao22}, for any $u,v\in V(\mathbb{T})$, conditioned on $\omega(u)=\omega(v)=0$, we have $N(u,v)=T(u,v)$ almost surely.  Thus, for any $u,v\in V(\mathbb{T})$, almost surely
\begin{equation}\label{e25}
N(u,v)\geq T(u,v)-2.
\end{equation}
Combining (\ref{e24}) and (\ref{e25}), we get that almost surely
\begin{equation}\label{e17}
b(\widetilde{0},\widetilde{n})\geq T(\widetilde{0},\widetilde{n})-2\geq T(0,n)-T(0,\widetilde{0})-T(n,\widetilde{n})-2.
\end{equation}
Dividing both sides of (\ref{e17}) by $n$, letting $n\rightarrow\infty$ and applying Proposition \ref{p1}, (\ref{e28}) and (\ref{e14}), we get $\beta_p(1)\geq\mu_p(1)$.
\end{proof}
To show that $\beta_p=\mu_p$, it remains to show the inequality in the other direction:
\begin{lemma}\label{l2}
Suppose $p>p_c$.  For any $x\in\mathbb{R}^2$, we have $\beta_p(x)\leq\mu_p(x)$.
\end{lemma}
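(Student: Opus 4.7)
Since Lemma~\ref{l1} gives $\beta_p(x)\ge\mu_p(x)$, the plan is to establish the reverse inequality by proving the almost-sure bound
\begin{equation*}
b(\widetilde{0},\widetilde{nx}) \le T(\widetilde{0},\widetilde{nx}) + o(n),
\end{equation*}
then dividing by $n$ and passing to the limit via Proposition~\ref{p1}, (\ref{e28}) and (\ref{e14}) (the last used to pass between $T(0,nx)$ and $T(\widetilde{0},\widetilde{nx})$ at negligible cost). As in Lemma~\ref{l1}, I take $x=1$; the general case is identical.

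The argument reuses the planar-duality input that powered Lemma~\ref{l1}, namely the identity $|N(u,v)-T(u,v)| \le 2$ a.s.\ from Proposition~2.5(ii) of \cite{Yao22}, where $N(u,v)$ is the maximum number of disjoint open Jordan circuits separating $u$ from $v$. Lemma~\ref{l1} combined this with the easy inequality $\mathbf{b}(\gamma)\ge N(u,v)$ valid for every right-most $\gamma$ from $u$ to $v$; the present task is the converse one of producing a right-most $\gamma$ with $\mathbf{b}(\gamma)\le N(u,v)+o(n)$.

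To construct such a $\gamma$ I fix a maximal family $\mathcal{C}_1,\dots,\mathcal{C}_N$ of disjoint open Jordan circuits separating $\widetilde{0}$ from $\widetilde{n}$, nested around $\widetilde{n}$ with $\mathcal{C}_1$ outermost. By maximality, none of the $N+1$ annular regions — between $\widetilde{0}$ and $\mathcal{C}_1$, between consecutive $\mathcal{C}_i$ and $\mathcal{C}_{i+1}$, and between $\mathcal{C}_N$ and $\widetilde{n}$ — contains a further open separating Jordan circuit, so by the planar Menger / max-flow min-cut theorem each admits a closed (blue) ``radial'' crossing. I build $\gamma$ region by region via the $*$-concatenation of Lemma~\ref{l6}, choosing each sub-path to hug the yellow face of its region's closed radial crossing with the crossing lying on its right. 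Because the right-boundary vertices produced by such a sub-path all sit on a closed cluster, they contribute zero to $\mathbf{b}(\gamma)$; and each of the $N$ transitions across a circuit $\mathcal{C}_i$ can be arranged to pick up exactly one new open vertex, namely the vertex of $\mathcal{C}_i$ that $\gamma$ crosses. Combined with $N\le T$ this yields the target $\mathbf{b}(\gamma)\le T+o(n)$.

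The principal obstacle is quantitative. The naive application of Lemma~\ref{l6} permits up to eight new right-boundary vertices per $*$-concatenation, and summed over the $N=\Theta(n)$ junctions this would give only $\mathbf{b}(\gamma)\le cN$ for some $c>1$, insufficient for equality. The tight bound requires coordinating the choice of the closed radial crossings and of the crossing points on each $\mathcal{C}_i$ so that the junction overhead is absorbed into the single open vertex already contributed by $\mathcal{C}_i$; equivalently, the $*$-concatenation at each junction must produce no new \emph{open} right-boundary vertex beyond that one. Engineering the concatenated right-most pieces to fit together cleanly, without accumulating spurious open right-boundary vertices at either the junctions or the two endpoint regions near $\widetilde{0}$ and $\widetilde{n}$, is where the bulk of the technical work will lie.
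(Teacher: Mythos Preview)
Your strategy is genuinely different from the paper's, and the obstacle you flag is real and, as the proposal stands, unresolved. Threading an open right-most path through $N\approx\mu_p(1)n$ annular shells requires $N$ junctions, and you correctly note that Lemma~\ref{l6} gives only an $O(1)$ overhead \emph{per junction}; summed over $\Theta(n)$ junctions this yields at best $\beta_p\le c\,\mu_p$ for some constant $c>1$, not $\beta_p\le\mu_p$. Your final paragraph acknowledges this and defers the fix to unspecified ``technical work,'' but that fix \emph{is} the proof: without a mechanism guaranteeing that each junction contributes at most one new open right-boundary vertex (not merely at most eight), the argument does not close. A secondary concern is that the building blocks themselves are only asserted: in each annulus you need an \emph{open} right-most path whose right boundary lies entirely on the chosen closed radial crossing and which starts and ends at prescribed vertices of the two bounding open circuits, and it is not clear such a path exists with the required endpoint control.

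The paper sidesteps the accumulation problem by performing a \emph{single} interface construction rather than $\Theta(n)$ local ones. It works in the thin tube $[0,n]\times[-\sqrt{n},\sqrt{n}]$, takes a left-right FPP geodesic $\gamma_G$ there with $T(\gamma_G)\le(1+\epsilon/2)\mu_p(1)n$ (Lemma~\ref{l3}), and builds an auxiliary open detour $\gamma'$ above $\gamma_G$ from box crossings (Lemma~\ref{l4}). In the region $\mathcal{D}$ enclosed by $\gamma'$ and the sub-geodesic $\gamma''$ it recolors the interior vertices of $\gamma''$ closed and reads off the open right-most path $\widehat{\gamma}$ along the resulting interface via Proposition~\ref{p4}(\ref{item2}). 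Every right-boundary vertex of $\widehat{\gamma}$ is closed in the recolored picture, so in the original configuration the open ones all lie on $\gamma''$, giving $\mathbf{b}(\widehat{\gamma})\le T(\gamma_G)$ in one stroke. Only two $*$-concatenations are then needed, to reach $\widetilde{0}$ and $\widetilde{n}$, costing $O(\sqrt{n})$ by Lemma~\ref{l5} and the chemical-distance event $\mathcal{F}_n$. The global interface captures the whole FPP cost at once, so there is no per-junction leakage to control.
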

Before proving this lemma, we collect some relevant notation and percolation preliminaries.  For $w,h>0$, define the box $\Lambda_{w,h}:=[0,w]\times[-h,h]$.
For $\theta\in[0,2\pi]$, we set $\Lambda_{w,h}^{\theta}:=e^{i\theta}\cdot\Lambda_{w,h}$ and write $\Lambda_{w,h}^{\theta}(x):=x+\Lambda_{w,h}^{\theta}$ for $x\in \mathbb{R}^2$.  For a tilted box of the form $\Lambda_{w,h}^{\theta}$, we call the segment
$e^{i\theta}\cdot\{\mbox{the left side of }\Lambda_{w,h}\}$ (resp. $e^{i\theta}\cdot\{\mbox{the right side of }\Lambda_{w,h}\}$) the left (resp. right) side of $\Lambda_{w,h}^{\theta}$.  Then a left-right crossing of $\Lambda_{w,h}^{\theta}$ can be defined similarly as that of the box $[x_1,x_2]\times[y_1,y_2]$ (see Section \ref{s1}).

For $x\in\mathbb{R}^2\backslash\{0\}$ and $h>0$, define the \textbf{box-crossing time} (also called cylinder passage time) of the box $\Lambda_{\|x\|_2,h}^{\arg(x)}$ by
\begin{equation*}
T(0,x;h):=\inf\{T(\gamma):\mbox{$\gamma$ is a left-right crossing of $\Lambda_{\|x\|_2,h}^{\arg(x)}$}\}.
\end{equation*}
The following lemma says that $T(0,nx;h(n))$ has the same strong law of large numbers as $T(0,nx)$ if $h(n)\rightarrow\infty$ and $h(n)/n\rightarrow 0$ when $n\rightarrow\infty$.
\begin{lemma}\label{l3}
Suppose $p>p_c$.  For every $x\in\mathbb{R}^2\backslash\{0\}$ and every height function ${h:\mathbb{N}\rightarrow \mathbb{R}^+}$ satisfying $h(n)\rightarrow\infty$ and $h(n)/n\rightarrow 0$ as $n\rightarrow\infty$, we have
\begin{equation*}
\lim_{n\rightarrow\infty}\frac{T(0,nx;h(n))}{n}=\mu_p(x)\qquad\mbox{$\mathbf{P}_p$-a.s}.
\end{equation*}
\end{lemma}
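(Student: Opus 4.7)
The plan is to prove matching inequalities $\liminf_n T(0,nx;h(n))/n\ge\mu_p(x)$ and $\limsup_n T(0,nx;h(n))/n\le\mu_p(x)$. The lower bound is a short boundary-correction argument: any left-right crossing $\gamma=(v_0,\ldots,v_k)$ of $\Lambda_{\|nx\|_2,h(n)}^{\arg(nx)}$ has endpoints with $\|v_0\|_\infty,\|v_k-nx\|_\infty\le h(n)+O(1)$ by definition of a crossing, and since $\omega(v)\in\{0,1\}$ and the graph distance on $\mathbb{T}$ is comparable to the $\ell^\infty$-distance, any straight-line lattice path gives $T(0,v_0)+T(v_k,nx)=O(h(n))$ deterministically. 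Subadditivity of $T$ then yields $T(\gamma)\ge T(v_0,v_k)\ge T(0,nx)-O(h(n))$; taking the infimum over $\gamma$, dividing by $n$, and using $h(n)/n\to 0$ together with (\ref{e28}) gives the lower bound $\mathbf{P}_p$-almost surely.

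For the upper bound I would first monotonize in $h$: whenever $h(n)\ge h$, every left-right crossing of the thinner box of height $h$ is also a crossing of the wider box of height $h(n)$, so $T(0,nx;h(n))\le T(0,nx;h)$. For each fixed $h$, the Kingman subadditive ergodic theorem applied to the stationary array $\{T(mx,nx;h):0\le m<n\}$ (after rounding $x$ to a nearby rational lattice direction so that the increments are genuine lattice vectors) produces an almost sure limit $T(0,nx;h)/n\to\mu_p^h(x)\in[\mu_p(x),\infty)$. Sending $n\to\infty$ first and then $h\to\infty$, the problem reduces to proving $\mu_p^h(x)\downarrow\mu_p(x)$.

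This reduction is the core of the argument. My plan is a block-renewal construction: fix $\epsilon>0$ and $N$ large with $\mathbf{E}_p T(0,Nx)\le(\mu_p(x)+\epsilon)N$, then exhibit a positive-probability event on which there exists an open path from $\widetilde{0}$ to $\widetilde{Nx}$ of passage time at most $(\mu_p(x)+2\epsilon)N$ that is confined to a strip of some height $h_0=h_0(\epsilon)$ around $[0,Nx]$. Ergodicity under translations by $Nx$ then yields a positive density of good blocks along direction $x$, which can be patched together using the Antal--Pisztora bound (\ref{e23}) at $O(1)$ additive cost per patch; concatenation produces a left-right crossing of $\Lambda^{\arg(x)}_{\|nx\|_2,h_0}$ with asymptotic passage-time density $\le\mu_p(x)+3\epsilon$, giving $\mu_p^{h_0}(x)\le\mu_p(x)+3\epsilon$.

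The principal obstacle is the transverse-concentration input underlying the block event: certifying that, with positive probability, a near-geodesic between $\widetilde{0}$ and $\widetilde{Nx}$ can be chosen to stay inside a strip of bounded height around $[0,Nx]$. This does not follow from the shape theorem alone, since the shape theorem controls only passage time as a function of Euclidean distance, not the spatial trajectory of minimizing paths. I would address it through a union-bound argument, partitioning candidate ``wide'' near-optimal paths by their maximal transverse excursion and using Kesten's exponential tail bound for the passage time together with (\ref{e23}) to control the contribution from paths that leave the strip of height $h_0$.
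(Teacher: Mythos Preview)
Your lower bound is fine. The upper-bound plan, however, has a real gap.

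In your block-renewal scheme the passage-time density of the concatenated crossing is bounded by roughly
\[
p_{\mathrm{good}}\,(\mu_p(x)+2\epsilon)\;+\;(1-p_{\mathrm{good}})\cdot C_1\;+\;C_2\,h_0/N,
\]
the last term coming from joining consecutive block crossings on each shared side (a vertical segment of length $\le 2h_0$, not ``$O(1)$''). To push this below $\mu_p(x)+3\epsilon$ you need both $p_{\mathrm{good}}$ close to $1$ (mere positive density of good blocks is not enough) and $h_0/N$ small. But $h_0$ is chosen only \emph{after} $N$, as a height large enough for a near-geodesic between $\widetilde 0$ and $\widetilde{Nx}$ to fit in the strip with the required probability; nothing you invoke gives $h_0=o(N)$. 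Your proposed cure---a union bound driven by Kesten's upper tail for $T$ together with Antal--Pisztora---controls the wrong quantities: to exclude a near-geodesic passing through a point $y$ far from $[0,Nx]$ one needs \emph{lower} bounds on $T(0,y)+T(y,Nx)$, and even the full shape theorem only yields $\mu_p(y)+\mu_p(Nx-y)\le(1+o(1))N\mu_p(x)$, which for a norm whose unit ball has flat facets does not force $y$ into an $o(N)$-tube around the segment. (There is also the separate problem that an Antal--Pisztora bridge across a run of bad blocks need not stay inside the height-$h_0$ strip.)

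The paper sidesteps all of this via a route specific to Bernoulli weights: planar max-flow/min-cut duality. Writing $\tau(0,nx;h)$ for the maximal number of vertex-disjoint open top-bottom crossings of the same box, Proposition~2.5(i) of \cite{Yao22} gives
\[
\tau(0,nx;h)\;\le\;T(0,nx;h)\;\le\;\tau(0,nx;h-1)+2.
\]
The strong law for $\tau$ with limit $\mu_p(x)$ is precisely the law of large numbers for maximal flows through tilted cylinders proved by Rossignol--Th\'eret \cite{RT10,RT10b}, so the lemma follows in two lines with no transverse-fluctuation input whatsoever.
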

\begin{proof}
Rossignol and Th\'{e}ret \cite{RT10} proved that for general FPP on $\mathbb{Z}^d$, when $n$ goes to infinity, the maximal flow (passage times of the edges are viewed as capacities) between the top and the bottom of the box $\Lambda_{\|nx\|_2,h(n)}^{\arg(x)}$ satisfies a strong law of large numbers, see Theorem 2.3 in \cite{RT10b} and the paragraph just below this theorem which states that the limiting constant for the maximal flow is equal to the corresponding time constant for FPP. (For the case that $h(n)$ has the same order as $n$ we refer the interested reader to Theorem 2.8 and Corollary 2.10 in \cite{RT10b}.)  The proof for the site version on $\mathbb{T}$ is essentially the same.  In the Bernoulli case the maximal flow between the top and the bottom of $\Lambda_{\|nx\|_2,h(n)}^{\arg(x)}$ is just the maximal number of disjoint open top-bottom crossings of this box, denoted by $\tau(0,nx;h(n))$.  Using Proposition 2.5 (i) in \cite{Yao22}, we observe that for all $n,h(n)\geq 2$,
\begin{equation*}
\tau(0,nx;h(n))\leq T(0,nx;h(n))\leq\tau(0,nx;h(n)-1)+2.
\end{equation*}
From this and the strong law of large numbers for $\tau(0,nx;h(n))$, Lemma \ref{l3} follows immediately.
\end{proof}
We shall use the next lemma to construct an open right-most path from $\widetilde{0}$ to $\widetilde{ne^{i\theta}}$ in a thin cylinder.
\begin{lemma}\label{l4}
For each $p>p_c$, there are $C_1,C_2>0$ such that for all $n\geq 10,\theta\in[0,2\pi]$ and $x\in\mathbb{R}^2$,
\begin{equation*}
\mathbf{P}_p\left(
\begin{aligned}
&\mbox{there exist open left-right crossings of $\Lambda_{n,\sqrt{n}/2}^{\theta}(x)$,}\\
&\mbox{and all such crossings lie in $\mathcal{C}_p^{\infty}$}
\end{aligned}
\right)\geq 1-C_1\exp(-C_2\sqrt{n}).
\end{equation*}
\end{lemma}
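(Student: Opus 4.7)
The plan is to decompose the event of interest as $A_1 \cap A_2$, where $A_1$ is the event that $\Lambda_{n,\sqrt{n}/2}^{\theta}(x)$ admits an open left-right crossing, and $A_2$ is the event that every finite open cluster meeting $\Lambda_{n,\sqrt{n}/2}^{\theta}(x)$ has Euclidean diameter strictly less than $n$. Since every left-right crossing of the box has diameter at least $n$ (the Euclidean distance between the left and right sides), on $A_2$ any such crossing must lie in an infinite cluster, and by uniqueness of the infinite open cluster in the supercritical regime this infinite cluster is $\mathcal{C}_p^{\infty}$. Hence $A_1 \cap A_2$ is contained in the event of Lemma \ref{l4}, and it suffices to bound $\mathbf{P}_p(A_1^c)$ and $\mathbf{P}_p(A_2^c)$ separately by expressions of the form $C_1 \exp(-C_2 \sqrt{n})$.

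For $A_2$, I would invoke the classical exponential decay of the radii of finite open clusters for supercritical site percolation on $\mathbb{T}$: there exists $c = c(p) > 0$ such that for every $v \in V(\mathbb{T})$, the probability that $v$ lies in a finite open cluster of diameter at least $n$ is at most $C e^{-cn}$. The number of vertices of $\mathbb{T}$ inside $\Lambda_{n,\sqrt{n}/2}^{\theta}(x)$ is of order $n^{3/2}$, so a union bound yields $\mathbf{P}_p(A_2^c) \leq C n^{3/2} e^{-cn}$, which is much smaller than the desired bound.

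For $A_1$, I would apply the standard supercritical crossing estimate for $\mathbb{T}$: for any axis-aligned rectangle of dimensions $w \times h$ with $w \geq h$, the probability of no open left-right crossing is at most $C e^{-ch}$. To handle an arbitrary tilt $\theta$, I tile the tilted box with a chain of overlapping axis-aligned squares $S_1, \ldots, S_k$ of side $\ell := \sqrt{n}/20$, with $k = O(\sqrt{n})$, chosen so that their union lies inside $\Lambda_{n,\sqrt{n}/2}^{\theta}(x)$, consecutive $S_i$ overlap in a rectangle of sides of order $\ell$, and the chain connects the left short side of $\Lambda_{n,\sqrt{n}/2}^{\theta}(x)$ to its right short side. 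Declare $S_i$ \emph{good} if it contains an open cluster touching each of its four sides; by the axis-aligned crossing estimate applied in both coordinate directions together with FKG, each $S_i$ is good with probability at least $1 - C e^{-c\ell}$. When every $S_i$ is good, their clusters necessarily merge in the overlaps into a single open cluster inside $\bigcup_i S_i \subset \Lambda_{n,\sqrt{n}/2}^{\theta}(x)$ connecting its two short sides, so $A_1$ holds. A union bound over the $O(\sqrt{n})$ squares then yields $\mathbf{P}_p(A_1^c) \leq C_1 \exp(-C_2 \sqrt{n})$, uniformly in $\theta$ and $x$.

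The main obstacle here is the uniform-in-$\theta$ control, since $\mathbb{T}$ has only a discrete rotational symmetry group and none of the classical crossing estimates are stated for tilted rectangles. The tiling reduction above brings everything back to the axis-aligned setting at the price of a polynomial factor in $n$, which is easily absorbed into the exponential decay; putting the two bounds together and adjusting the constants $C_1, C_2$ completes the proof.
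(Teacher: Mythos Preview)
Your decomposition into $A_1$ (existence of a crossing) and $A_2$ (no large finite open clusters meet the box) is correct, and your treatment of $A_2$ is essentially the paper's second step, which invokes the exponential tail of finite open clusters (the estimate quoted from (8.20) in \cite{Gri99}) together with a union bound.

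For $A_1$, however, the paper takes a shorter route that avoids tiling altogether. It exploits the self-duality of site percolation on $\mathbb{T}$: if no closed vertex within Euclidean distance~$1$ of the bottom long side of $\Lambda_{n,\sqrt{n}/2}^{\theta}(x)$ lies in a closed cluster of radius exceeding $\sqrt{n}-2$, then no closed path crosses the box in the short direction, and by planar duality an open left--right crossing must exist. The exponential decay of closed clusters in (\ref{e13}) is stated for Euclidean disks and is therefore automatically uniform in the tilt~$\theta$; a union bound over the $O(n)$ closed vertices near the bottom side yields $\mathbf{P}_p(A_1^c)\le C e^{-c\sqrt{n}}$ directly, with no reduction to axis-aligned boxes needed.

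Your tiling argument is a legitimate alternative and has the virtue of not relying on planar duality, so it would transfer to settings where duality is unavailable. As written, though, two points need more care before it is complete: (i) the assertion that the good clusters of consecutive squares ``necessarily merge in the overlaps'' is not automatic---two clusters, each touching all four sides of its own square, need not meet in the overlap region; the standard fix is to also require an open crossing of each overlap rectangle in the direction transverse to the chain, which then glues the pieces by planarity---and (ii) if the chain of squares lies strictly inside the tilted box, the resulting open cluster need not actually reach the short sides, so the end squares must be allowed to straddle those sides or an extra local argument is needed. Both are routine to patch, but the duality argument in the paper sidesteps them entirely.
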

\begin{proof}
It is well known that (see, e.g., Theorem 5.4 in \cite{Gri99}) for $p>p_c$ there exists $C_3>0$ such that for all $n\geq 1$,
\begin{equation}\label{e13}
\mathbf{P}_p(\mbox{there is a closed path from 0 to some vertex outside $\mathbb{D}_n$})\leq\exp(-C_3n).
\end{equation}
Suppose $n\geq 10$.  Observe that if for each closed vertex $v$ within Euclidean distance 1 from the bottom side of $\Lambda_{n,\sqrt{n}/2}^{\theta}(x)$, the closed cluster containing $v$ is contained in $\mathbb{D}_{\sqrt{n}-2}(v)$, then there is an open left-right crossing of $\Lambda_{n,\sqrt{n}/2}^{\theta}(x)$.  It follows from this observation and (\ref{e13}) that for each $p>p_c$ there are $C_4,C_5>0$ such that for all $n\geq 10,\theta\in[0,2\pi]$ and $x\in\mathbb{R}^2$,
\begin{equation}\label{e15}
\mathbf{P}_p(\mbox{there exist open left-right crossings of $\Lambda_{n,\sqrt{n}/2}^{\theta}(x)$})\geq 1-C_4\exp(-C_5\sqrt{n}).
\end{equation}

It is well known that (see, e.g., (8.20) in \cite{Gri99}) for $p>p_c$ there exist $C_6,C_7>0$ such that for all $n\geq 1$,
\begin{equation*}
\mathbf{P}_p(\mbox{there is an open path from 0 to a vertex outside $\mathbb{D}_n$ and $0\notin\mathcal{C}_p^{\infty}$})\leq C_6n^2\exp(-C_7n).
\end{equation*}
Combining this with (\ref{e15}), it is easy to get Lemma \ref{l4}.
\end{proof}

\begin{proof}[Proof of Lemma \ref{l2}]
For simplicity, we shall prove the lemma in the case $x=1$; the proof extends easily to the general case.  Fix any $\epsilon>0$.  For our purpose, we need to
show that for each large $n$, with high probability we can construct an open right-most path $\gamma$ from $\widetilde{0}$ to $\widetilde{n}$ such that $\mathbf{b}(\gamma)\leq(1+\epsilon)\mu_p(1)n$.  First, define the events
\begin{align*}
&\mathcal{A}_n:=\left\{
\begin{aligned}
&\mbox{there exist open left-right crossings of $[0,n]\times[2\sqrt{n},3\sqrt{n}]$,}\\
&\mbox{open top-bottom crossings of $[0,\sqrt{n}]\times[-\sqrt{n},3\sqrt{n}]$ and}\\
&\mbox{open top-bottom crossings of $[n-\sqrt{n},n]\times[-\sqrt{n},3\sqrt{n}]$,}\\
&\mbox{and all such crossings lie in $\mathcal{C}_p^{\infty}$}
\end{aligned}
\right\},\\
&\mathcal{E}_n(\epsilon):=\{T(0,n;\sqrt{n})\leq(1+\epsilon/2)\mu_p(1)n\},\\
&\mathcal{F}_n:=\left\{
\begin{aligned}
&\mbox{$D_{\omega}(x,y)\leq 2C\sqrt{n}$ for all $x,y\in B_{\sqrt{n}}(0)\cap\mathcal{C}_p^{\infty}$,}\\
&\mbox{and $D_{\omega}(x,y)\leq 2C\sqrt{n}$ for all $x,y\in B_{\sqrt{n}}(n)\cap\mathcal{C}_p^{\infty}$}
\end{aligned}
\right\},
\end{align*}
where $C$ is the constant in (\ref{e23}).  By Lemmas \ref{l3} and \ref{l4} and (\ref{e23}), for all $n$ large enough (depending on $\epsilon$),
\begin{equation}\label{e5}
\mathbf{P}_p(\mathcal{A}_n\cap\mathcal{E}_n(\epsilon)\cap\mathcal{F}_n)\geq 1-\epsilon.
\end{equation}

\begin{figure}
\centering
\includegraphics[height=0.37\textwidth]{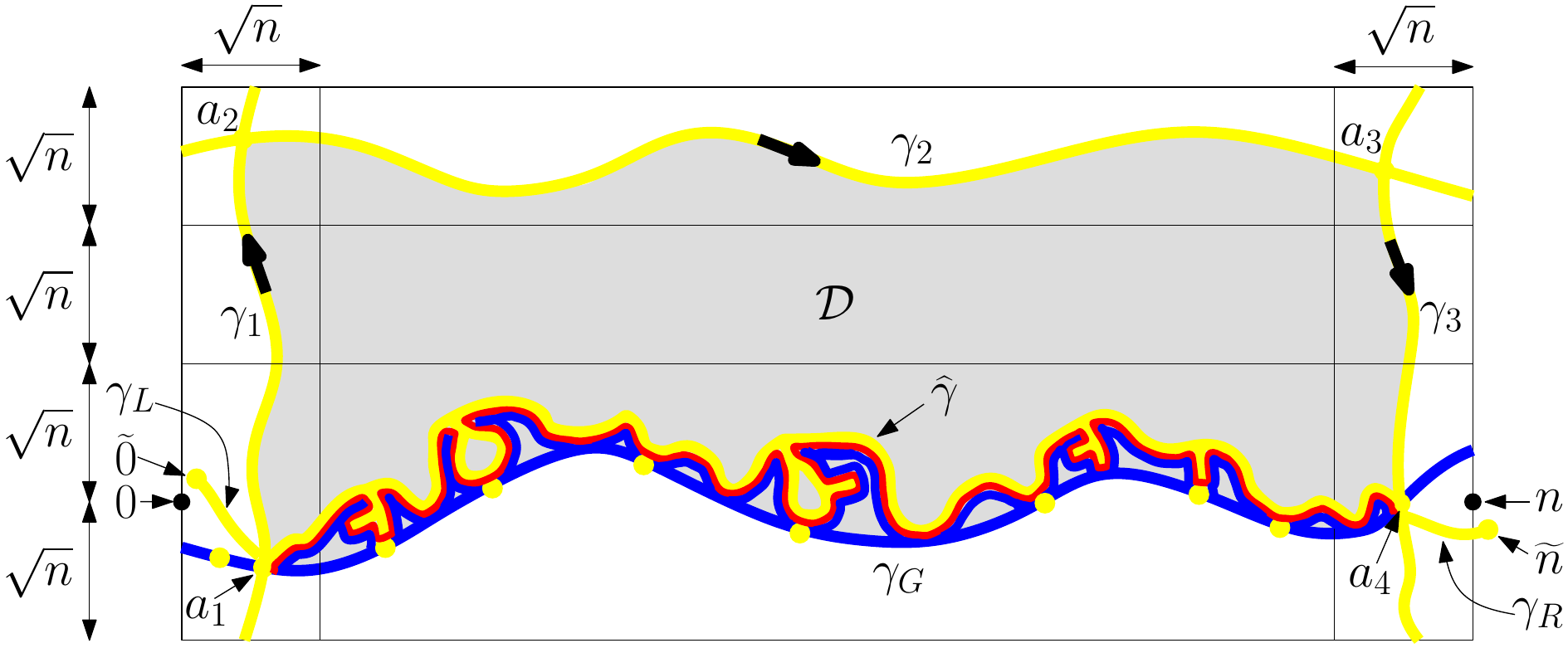}
\caption{Construction of the right-most path $\gamma=\gamma_L*\widehat{\gamma}*\gamma_R$ from $\widetilde{0}$ to $\widetilde{n}$.}\label{key}
\end{figure}
In the following, we assume that $n$ is sufficiently large and the event $\mathcal{A}_n\cap\mathcal{E}_n(\epsilon)\cap\mathcal{F}_n$ occurs.  The construction of the path $\gamma$ is illustrated in Figure \ref{key}.  We denote by $\gamma_1$ an open bottom-top crossing of $[0,\sqrt{n}]\times[-\sqrt{n},3\sqrt{n}]$, $\gamma_2$ an open left-right crossing of $[0,n]\times[2\sqrt{n},3\sqrt{n}]$,  and $\gamma_3$ an open top-bottom crossing of $[n-\sqrt{n},n]\times[-\sqrt{n},3\sqrt{n}]$, respectively.  Let $\gamma_G$ (here ``$G$'' refers to geodesic) denote a left-right crossing of $[0,n]\times[-\sqrt{n},\sqrt{n}]$ with $T(\gamma_G)=T(0,n;\sqrt{n})$.
For a fixed $\omega$, we can choose these four crossings in some deterministic way.  The paths $\gamma_1,\gamma_2$ and $\gamma_3$ are oriented as shown in Figure \ref{key}.  When traveling along $\gamma_1$, we let $a_1$ (resp. $a_2$) denote the last (resp. first) intersection vertex of $\gamma_1$ with $\gamma_G$ (resp. $\gamma_2$); when traveling along $\gamma_3$, we let $a_3$ (resp. $a_4$) denote the last (resp. first) intersection vertex of $\gamma_3$ with $\gamma_2$ (resp. $\gamma_G$).  The usual concatenation of the piece of $\gamma_1$ from $a_1$ to $a_2$, the piece of $\gamma_2$ from $a_2$ to $a_3$ and the piece of $\gamma_3$ from $a_3$ to $a_4$ yields a self-avoiding open path from $a_1$ to $a_4$, denoted by $\gamma'$.  Let $\gamma''$ be the subpath of $\gamma_G$ from $a_1$ to $a_4$.  Note that $\gamma'$ intersects $\gamma''$ only at $a_1$ and $a_4$.  Then the concatenation of $\gamma''$ and the reversal of $\gamma'$ forms a circuit.  Denote by $\mathcal{D}$ the discrete region (i.e., a union of hexagons) that is composed of this circuit and the region surrounded by it.  Now we define a new configuration $\omega'$ from the original configuration $\omega$ by redeclaring the open vertices in $\{\gamma''\setminus\{a_1,a_4\}\}$ to be closed while leaving the states of all other vertices unchanged.  For $\omega'$ restricted to $\mathcal{D}$, the interface between the open cluster containing $\gamma'$ and the closed cluster containing $\{\gamma''\setminus\{a_1,a_4\}\}$ induces an open right-most path $\widehat{\gamma}$ from $a_1$ to $a_4$ in $\mathcal{D}$, with all the hexagons of $\widehat{\gamma}$ adjacent to the interface (see (\ref{item2}) of Proposition \ref{p4}).  Observe that $\widehat{\gamma}$ is also open in $\omega$ and, by construction, all the vertices of $\partial^+\widehat{\gamma}$ are closed in $\omega'$ and all the open vertices of $\partial^+\widehat{\gamma}$ in $\omega$ are contained in $\{\gamma''\setminus\{a_1,a_4\}\}$.  In the rest of the proof we focus on $\omega$.  The above argument implies that
\begin{equation}\label{e3}
\mathbf{b}(\widehat{\gamma})\leq T(\gamma'')-2\leq T(\gamma_G)\leq (1+\epsilon/2)\mu_p(1)n,
\end{equation}
By the construction, it is clear that $a_1\in\mathcal{C}_p^{\infty}\cap B_{\sqrt{n}}(0)$ and $a_4\in\mathcal{C}_p^{\infty}\cap B_{\sqrt{n}}(n)$, which implies that  $\widetilde{0}\in B_{\sqrt{n}}(0)$ and  $\widetilde{n}\in B_{\sqrt{n}}(n)$.  Let $\gamma_L$ be a shortest open path from $\widetilde{0}$ to $a_1$, and let $\gamma_R$ be a shortest open path from $a_4$ to $\widetilde{n}$, where ``shortest'' means in the lattice distance.  Note that $\gamma_L,\gamma_R$ are right-most paths.  By successively $*$-concatenating $\gamma_L,\widehat{\gamma}$ and $\gamma_R$, we obtain the open right-most path $\gamma$ from $\widetilde{0}$ to $\widetilde{n}$, such that
\begin{align}
\mathbf{b}(\gamma)&\leq \mathbf{b}(\widehat{\gamma})+\mathbf{b}(\gamma_L)+\mathbf{b}(\gamma_R)+16\quad\mbox{by Lemma \ref{l6}}\nonumber\\
&\leq (1+\epsilon/2)\mu_p(1)n+5D_{\omega}(\widetilde{0},a_1)+5D_{\omega}(a_4,\widetilde{n})+16\quad\mbox{by (\ref{e3}) and Lemma \ref{l5}}\nonumber\\
&\leq (1+\epsilon/2)\mu_p(1)n+20C\sqrt{n}+16\quad\mbox{since $\mathcal{F}_n$ occurs}.\label{e4}
\end{align}
Letting $\epsilon\downarrow 0$, then (\ref{e5}), (\ref{e4}) together with Proposition \ref{p1} implies that $\beta_p(1)\leq\mu_p(1)$.
\end{proof}
\begin{proof}[Proof of Proposition \ref{p2}]
Proposition \ref{p2} follows from Lemmas \ref{l1} and \ref{l2} immediately.
\end{proof}

\section{Proof of Theorem \ref{t1}}\label{theorem}
\begin{proof}[Proof of Theorem \ref{t1}]
First, let us prove (\ref{e6}).  Write $\mathbb{U}:=\{x\in\mathbb{R}^2:\|x\|_2=1\}$.  Theorem \ref{t2} and Proposition \ref{p2} imply that for each $\epsilon\in(0,\nu)$, there exists $p_0\in(p_c,1)$ such that for all $p\in(p_c,p_0)$ and all $u\in\mathbb{U}$,
\begin{equation}\label{e8}
|L(p)\beta_p(u)-\nu|\leq\epsilon.
\end{equation}
It follows from (\ref{e10}) that for any norm $\rho$ on $\mathbb{R}^2$ and any rectifiable Jordan curve ${\gamma:[0,1]\rightarrow\mathbb{R}^2}$ with $\Leb(\interior(\gamma))=1$, we have
\begin{equation}\label{e7}
\len_{\rho}(\gamma)=\sup_{N\geq 1}\sup_{0\leq t_0\leq\cdots\leq t_N\leq 1}\sum_{i=1}^{N}\rho\left(\frac{\gamma(t_i)-\gamma(t_{i-1})}{\|\gamma(t_i)-\gamma(t_{i-1})\|_2}\right)\|\gamma(t_i)-\gamma(t_{i-1})\|_2.
\end{equation}
Combining (\ref{e7}) and (\ref{e8}), we obtain that for each $p\in(p_c,p_0)$,
\begin{equation}\label{e9}
|L(p)\len_{\beta_p}(\gamma)-\nu\len_{\|\cdot\|_2}(\gamma)|\leq\epsilon\len_{\|\cdot\|_2}(\gamma).
\end{equation}
Let $\gamma_p$ denote a Jordan curve such that $\len_{\beta_p}(\gamma_p)=\varphi_p$ and $\Leb(\interior(\gamma_p))=1$ (i.e., $\gamma_p$ is a shift of $\partial\widehat{W}_p$), and let $\mathbf{C}$ denote a Euclidean circle of radius $1/\sqrt{\pi}$ (so ${\Leb(\interior(\mathbf{C})=1}$).
Then (\ref{e9}) implies that for all $p\in(p_c,p_0)$,
\begin{align*}
&L(p)\len_{\beta_p}(\gamma_p)\leq L(p)\len_{\beta_p}(\mathbf{C})\leq(\nu+\epsilon)\len_{\|\cdot\|_2}(\mathbf{C})\quad\mbox{and}\\
&\nu\len_{\|\cdot\|_2}(\mathbf{C})\leq\nu\len_{\|\cdot\|_2}(\gamma_p)\leq\left(\frac{\nu}{\nu-\epsilon}\right)L(p)\len_{\beta_p}(\gamma_p).
\end{align*}
Then, letting $\epsilon\downarrow 0$, we get
\begin{equation*}
\lim_{p\downarrow p_c}L(p)\varphi_p=\nu\len_{\|\cdot\|_2}(\mathbf{C})=2\sqrt{\pi}\nu,
\end{equation*}
which concludes the proof of (\ref{e6}).

Next, we prove the convergence of $\widehat{W}_p$ as $p\downarrow p_c$.  For any $x\in W_p$, it follows from the definition of $W_p$ that
$u\cdot x\leq\beta_p(u)$ for all $u\in\mathbb{U}$, where $\cdot$ denotes the Euclidean scalar product.  Then, by (\ref{e8}), for each $p\in(p_c,p_0)$ we have
\begin{equation*}
L(p)u\cdot x\leq L(p)\beta_p(u)\leq\nu+\epsilon,
\end{equation*}
which gives that $L(p)x\in\mathbb{D}_{\nu+\epsilon}$.  Therefore, for each $p\in(p_c,p_0)$,
\begin{equation}\label{e11}
L(p)W_p\subset\mathbb{D}_{\nu+\epsilon}.
\end{equation}
By (\ref{e8}), for all $p\in(p_c,p_0)$, $x\in\mathbb{D}$ and $u\in\mathbb{U}$, we have
\begin{equation*}
\nu u\cdot x\leq\nu\leq\left(\frac{\nu}{\nu-\epsilon}\right)L(p)\beta_p(u),
\end{equation*}
which gives that $\nu x\in\left(\frac{\nu}{\nu-\epsilon}\right)L(p)W_p$. Thus, for each $p\in(p_c,p_0)$,
\begin{equation}\label{e12}
\mathbb{D}_{\nu}\subset\left(\frac{\nu}{\nu-\epsilon}\right)L(p)W_p.
\end{equation}
Combining (\ref{e11}) and (\ref{e12}) and letting $\epsilon\downarrow 0$, we obtain that $d_H(L(p)W_p,\mathbb{D}_{\nu})\rightarrow 0$ as $p\downarrow p_c$.  Thus, ${d_H(\widehat{W}_p,\mathbb{D}_{1/\sqrt{\pi}})\rightarrow 0}$ as $p\downarrow p_c$.
\end{proof}



\begin{thebibliography}{99}

\bibitem{AP96}
Antal, P., Pisztora, A.: On the chemical distance for supercritical Bernoulli percolation.
\emph{Ann. Probab.} \textbf{24}, 1036--1048 (1996). \MR{1404543}

\bibitem{ADH17}
Auffinger, A., Damron, M., Hanson, J.: \emph{50 years of first-passage
percolation.} University Lecture Series Vol. \textbf{68}, American
Mathematical Society (2017). \MR{3729447}

\bibitem{BD13}
Beffara, V., Duminil-Copin, H.: Planar percolation with a glimpse of Schramm-Loewner evolution. \emph{Probability Surveys} \textbf{10}, 1--50 (2013). \MR{3161674}

\bibitem{Ben10}
Benjamini, I.: Random planar metrics. In \emph{Proceedings of the International Congress of Mathematicians},
vol. IV, pp. 2177--2187. Hindustan Book Agency, New Delhi (2010). \MR{2827966}

\bibitem{Ben13}
Benjamini, I.: Euclidean vs. graph metric. In \emph{Erd\"{o}s Centennial. Bolyai Soc. Math. Stud.} \textbf{25}, 35--57. J\'{a}nos Bolyai Math. Soc., Budapest (2013). \MR{3203593}

\bibitem{BLPR15}
Biskup, M., Louidor, O., Procaccia, E. B., Rosenthal, R.: Isoperimetry in two-dimensional
percolation. \emph{Comm. Pure Appl. Math.} \textbf{68} (9), 1483--1531 (2015). \MR{3378192}

\bibitem{BR06}
Bollob\'{a}s, B., Riordan, O.: \emph{Percolation.} Cambridge University
Press, New York (2006). \MR{2283880}

\bibitem{CN06}
Camia, F., Newman, C.M.:  Critical percolation: the full scaling
limit. \emph{Commun. Math. Phys.} \textbf{268}, 1--38 (2006). \MR{2249794}

\bibitem{CN07}
Camia, F., Newman, C.M.: Critical percolation exploration path and $SLE_6$: a
proof of convergence. \emph{Probab. Theory Relat. Fields} \textbf{139}(3), 473--519 (2007). \MR{2322705}

\bibitem{CD20}
Cerf, R., Dembin, B. Vanishing of the anchored isoperimetric profile in bond
percolation at $p_c$. \emph{Electron. Commun. Probab.} \textbf{25}, no. 2, 7 pp. (2020). \MR{4053905}

\bibitem{Ch70}
Cheeger, J.: A lower bound for the smallest eigenvalue of the Laplacian. In \emph{Proceedings of the Princeton conference in honor of Professor S.
Bochner} 195--199. Princeton Univ. Press, Princeton, N.J. (1970). \MR{0402831}

\bibitem{CK81}
Cox, J.T., Kesten, H.: On the continuity of the time constant of
first-passage percolation. \emph{J. Appl. Probab.} \textbf{18}, 809--819
(1981). \MR{0633228}

\bibitem{Dem20a}
Dembin, B.: Anchored isoperimetric profile of the infinite cluster in supercritical bond percolation is Lipschitz continuous. \emph{Electron. Commun. Probab.} \textbf{25}, no. 34, 13 pp. (2020). \MR{4092764}

\bibitem{Dem20b}
Dembin, B. Existence of the anchored isoperimetric profile in supercritical bond
percolation in dimension two and higher. \emph{ALEA Lat. Am. J. Probab. Math. Stat.} \textbf{17}, 205--252
(2020). \MR{4105293}

\bibitem{DC13}
Duminil-Copin, H.: Limit of the Wulff Crystal when approaching criticality for site percolation on the triangular lattice.
\emph{Electron. Commun. Probab.} \textbf{18}, no. 93, 9 pp. (2013). \MR{3151749}

\bibitem{GMPT17}
Garet, O., Marchand, R., Procaccia, E. B., Th\'{e}ret, M.: Continuity of the
time and isoperimetric constants in supercritical percolation. \emph{Electron. J. Probab.} \textbf{22}, no. 78, 35 pp.
(2017). \MR{3710798}

\bibitem{Gol18a}
Gold, J.: Intrinsic isoperimetry of the giant component of supercritical bond percolation in dimension two.
\emph{Electron. J. Probab.} \textbf{23}, no. 53, 41 pp. (2018). \MR{3814247}

\bibitem{Gol18b}
Gold, J.: Isoperimetry in supercritical bond percolation in dimensions three and
higher. \emph{Ann. Inst. Henri Poincar\'{e} Probab. Stat.} \textbf{54} (4), 2092--2158 (2018). \MR{3865668}

\bibitem{GPS18}
Garban, C., Pete, G., Schramm, O.: The scaling limits of
near-critical and dynamical percolation. \emph{J. Eur. Math. Soc.}
\textbf{20}, 1195--1268 (2018). \MR{3790067}

\bibitem{GM07}
Garet, O., Marchand, R.: Large deviations for the chemical distance
in supercritical Bernoulli percolation. \emph{Ann. Probab.} \textbf{35},
833--866 (2007). \MR{2319709}

\bibitem{Gri99}
Grimmett, G.: \emph{Percolation}, 2nd ed. Springer-Verlag Berlin (1999). \MR{1707339}

\bibitem{Kes86}
Kesten, H.: Aspects of first passage percolation. In \emph{Lecture Notes
in Math.}, Vol 1180, pp. 125--264 Berlin: Springer (1986). \MR{0876084}

\bibitem{Nol08}
Nolin, P.: Near critical percolation in two-dimensions. \emph{Electron. J.
Probab.} \textbf{13}, 1562--1623 (2008). \MR{2438816}

\bibitem{Pet08}
Pete, G.: A note on percolation on $\mathbb{Z}^d$: Isoperimetric profile via exponential cluster
repulsion. \emph{Electron. Commun. Probab.} \textbf{13}, 377--392 (2008). \MR{2415145}

\bibitem{RT10}
Rossignol, R., Th\'{e}ret, M.: Lower large deviations and laws of large numbers
for maximal flows through a box in first passage percolation. \emph{Ann. Inst. Henri
Poincar\'{e} Probab. Stat.}, \textbf{46} (4), 1093--1131 (2010). \MR{2744888}

\bibitem{RT10b}
Rossignol, R., Th\'{e}ret, M.: Law of large numbers for the maximal
flow through tilted cylinders in two-dimensional first passage percolation.
\emph{Stoch. Proc. Appl.} \textbf{120} 873--900 (2010). \MR{2610330}

\bibitem{SW01}
Smirnov, S.,  Werner, W.: Critical exponents for two-dimensional
percolation. \emph{Math. Res. Lett.} \textbf{8}, 729--744 (2001). \MR{1879816}

\bibitem{Tay74}
Taylor, J. E.: Existence and structure of solutions to a class of nonelliptic variational problems.
\emph{Sympos. Math.} \textbf{14}, no. 4, 499--508 (1974). \MR{0420407}

\bibitem{Tay75}
Taylor, J. E.: Unique structure of solutions to a class of nonelliptic variational problems. \emph{Proc.
Sympos. Pure Math.} \textbf{27}, 419--427 (1975). \MR{0388225}

\bibitem{Yao22}
Yao, C.-L.: Convergence of limit shapes for 2D near-critical first-passage percolation, \ARXIV{2104.01211}.  To appear in
\emph{Ann. Inst. Henri Poincar\'{e} Probab. Stat.}
\end{thebibliography}


\begin{acks}
The author was supported by the National Key R\&D Program of China (No. 2020YFA0712700),
the National Natural Science Foundation of China (No. 12288201) and the Key Laboratory of Random Complex Structures and Data Science, CAS (No. 2008DP173182).
\end{acks}


\end{document}